\setlist{topsep=1pt,itemsep=0pt,labelwidth=8pt,labelsep=5pt,leftmargin=16pt}
\let\oldnl\nl
\newcommand{\nonl}{\renewcommand{\nl}{\let\nl\oldnl}}
\DeclareMathOperator*{\argmin}{arg\,min}
\DeclarePairedDelimiterX\Econd[2]{[}{]}{#1 \mkern2mu\delimsize\vert\mkern2mu #2}
\newtheorem{lemma}{Lemma}
\newtheorem{theorem}{Theorem}
\newtheorem{corollary}{Corollary}
\theoremstyle{definition}
\newtheorem{assumption}{Assumption}
\theoremstyle{remark}
\newtheorem{remark}{Remark}
\newlength\myindent
\newcommand\bindent{%
  \begingroup
  \setlength{\itemindent}{\myindent}
  \addtolength{\algorithmicindent}{\myindent}
}
\newcommand\eindent{\endgroup}
\title{Zeroth-Order Katyusha
}
\title{\LARGE \bf
Zeroth-Order Katyusha: An Accelerated Derivative-Free Method for Composite Convex Optimization
}
\author{Silan Zhang and Yujie Tang
\thanks{This work was supported by the National Natural Science Foundation of China through grant 72301008.
S. Zhang and Y. Tang are with the College of Engineering, Peking University, Beijing, China. 
        Email: {\tt\small zhangsilan@stu.pku.edu.cn}, {\tt\small yujietang@pku.edu.cn}}%
}
\date{}
\begin{document}

\maketitle

\begin{abstract}
We investigate accelerated zeroth-order algorithms for smooth composite convex optimization problems. While for unconstrained optimization, existing methods that merge 2-point zeroth-order gradient estimators with first-order frameworks usually lead to satisfactory performance, for constrained/composite problems, there is still a gap in the complexity bound that is related to the non-vanishing variance of the 2-point gradient estimator near an optimal point. To bridge this gap, we propose the Zeroth-Order Loopless Katyusha (\emph{ZO-L-Katyusha}) algorithm, leveraging the variance reduction as well as acceleration techniques from the first-order loopless Katyusha algorithm. We show that \emph{ZO-L-Katyusha} is able to achieve accelerated linear convergence for compositve smooth and strongly convex problems, and has the same oracle complexity as the unconstrained case. Moreover, the number of function queries to construct a zeroth-order gradient estimator in \emph{ZO-L-Katyusha} can be made to be $O(1)$ on average. These results suggest that \emph{ZO-L-Katyusha} provides a promising approach towards bridging the gap in the complexity bound for zeroth-order composite optimization.
\end{abstract}

\section{Introduction}
\label{section:introduction}

Zeroth-order (ZO) or derivative-free optimization problems have been the subject of intense studies recently. They have emerged from various practical situations where explicit gradient information of the objective function can be very difficult or even impossible to compute, and only function evaluations are accessible to the decision maker. Zeroth-order optimization methods have seen wide applications in multiple disciplines, including signal processing~\cite{liu2020primer}, parameter tuning for machine learning~\cite{snoek_practical_2012}, black-box adversarial attacks on deep neural networks~\cite{chen_zoo_2017}, model-free control~\cite{ren_lqr_2021}, etc.

Various types of zeroth-order algorithms have been proposed to address the challenge of lacking gradient information in the optimization procedure. In this paper, we particularly focus on the line of methods initiated by~\cite{nemirovskij1983problem} and further developed by~\cite{nesterov_random_2017,duchi2015optimal,ghadimi2013stochastic} and other related works. Basically, in this line of methods, a zeroth-order gradient estimator is constructed based on function values acquired at a few randomly explored points, which will then be combined with first-order optimization frameworks, leading to a zeroth-order optimization algorithm. For instance, \cite{nesterov_random_2017} shows that, by combining a two-point Gaussian random gradient estimator with the vanilla gradient descent and Nesterov's accelerated gradient descent iterations, we can obtain a zeroth-order optimization algorithm for deterministic unconstrained smooth convex problems with an oracle complexity of $O(d/\epsilon)$ and $O(d/\sqrt{\epsilon})$, respectively, where $d$ denotes the problem dimension. Similar techniques have also been adapted to the stochastic settings and nonconvex settings~\cite{nesterov_random_2017,duchi2015optimal,shamir2017optimal,ghadimi2013stochastic}. Recently, \cite{balasubramanian2022zeroth} proposed a zeroth-order stochastic approximation algorithms based on the Frank-Wolfe method, focusing particularly on handling constraints,
high dimensionality and saddle-point avoiding. \cite{zhang2022new} proposed a one-point feedback scheme that queries the function value once per iteration but achieves comparable performance with two-point zeroth-order algorithms. \cite{kungurtsev2021zeroth} and \cite{pougkakiotis2023zeroth} studied zeroth-order algorithms for stochastic weakly convex optimization. \cite{pmlr-v202-ren23b} investigated how to escape saddle points for zeroth-order algorithms using two-point gradient estimators.
In sum, for \emph{unconstrained} problems, existing works have shown that we are generally able to design zeroth-order algorithms with oracle complexities at most $O(d)$ times greater than their first-order counterparts.

However, for \emph{constrained} deterministic optimization problems, we notice a gap in terms of oracle complexity between zeroth-order and first order methods. In~\cite{pmlr-v89-sahu19a}, the authors proposed a zeroth-order Frank-Wolf algorithm with an oracle complexity of $O(d/\epsilon)$, but their method employs a gradient estimator whose computation requires $O(d)$ function queries, making it less flexible for problems of high dimensions. The recent work~\cite{liu2024inexact} proposed an inexact preconditioned zeroth-order algorithm for nonconvex composite optimization problems, based on a $2d$-point gradient estimator and also estimation of second-order information using finite differences. Classical two-point gradient estimators, on the other hand, can only achieve a suboptimal oracle complexity $O(d/\epsilon^2)$ for both Frank-Wolf~\cite{balasubramanian2018zeroth} and projected gradient methods~\cite{duchi2015optimal}. It has been suspected that the major cause of this gap is the non-vanishing variance of two-point gradient estimators as the iterate approaches an optimal point on the boundary, which could slow down convergence and lead to higher complexities~\cite{jin2023zeroth}. Moreover, zeroth-order methods that apply acceleration techniques to constrained problems are scanty, and to the best of our knowledge, none of them has achieved a theoretical oracle complexity that can match the first-order counterpart.

The non-vanishing variance of the gradient estimator suggests the adaption of variance reduction as a possible approach to close the aforementioned gap in zeroth-order optimization. Variance reduction techniques have been widely used in machine learning problems to further enhance the performance of stochastic-gradient-descent-type methods. Algorithms such as SAG\cite{schmidt2017minimizing}, SAGA\cite{defazio2014saga}, SVRG\cite{johnson2013accelerating}, SARAH\cite{nguyen2017sarah} and SPIDER\cite{fang2018spider} have emerged to solve finite-sum problems that are prevalent in the machine learning community. Along this line, some variants incorporating first-order acceleration techniques have also been proposed, including ASVRG~\cite{shang2018asvrg}, Katyusha~\cite{allen2018katyusha} and its loopless counterpart L-Katyusha~\cite{kovalev2020don}. We noticed that some existing works~\cite{fang2018spider,ji2019improved,huang2019faster} have investigated combining variance reduction techniques with zeroth-order optimization. However, \cite{fang2018spider,ji2019improved} considers unconstrained nonconvex problems, and \cite{huang2019faster} considers composite nonconvex problems. Moreover, these works concentrate more on the finite-sum problems or stochastic problems for machine learning tasks. 
The power of variance reduction techniques on reducing the variance of zeroth-order gradient estimators in deterministic constrained settings is still underexplored.

\subsection{Our Contributions}
In this paper, we develop an accelerated proximal zeroth-order algorithm for composite optimization problems with strongly convex objectives. The proposed algorithm leverages both variance reduction and acceleration techniques, and achieves superior convergence rate compared to existing zeroth-order algorithms. Specifically, our contributions can be summarized as follows:
\begin{itemize}
    \item We propose \emph{ZO-L-Katyusha}, an accelerated proximal zeroth-order algorithm, to minimize a strongly convex composite objective function. One advantage of \emph{ZO-L-Katyusha} is that, with properly chosen algorithmic parameters, the algorithm only needs $O(1)$ function queries per iteration on average. Together with the loopless structure, this makes it more flexible than algorithms based on $O(d)$-point finite difference gradient estimators, especially when the dimension of the problem $d$ is high. On the other hand, \emph{ZO-L-Katyusha} also allows using purely $O(d)$-point gradient estimators, making the algorithm flexible and applicable to a wide range of practical scenarios.

    \item We prove that the oracle complexity of \emph{ZO-L-Katyusha} can be upper bounded by $O(d\sqrt{\kappa}\ln (1/\epsilon))$, where $\kappa$ is the reciprocal condition number of the objective function. This is significantly better than the state-of-art zeroth-order algorithms for \emph{constrained} or \emph{composite} strongly convex optimization problems. To the best of our knowledge, this is the first accelerated zeroth-order algorithm that can match the performance of first order algorithms up to a factor of variable dimension under constrained settings.
    
    
\end{itemize}

\subsection{Notations}

Throughout this paper, we denote the standard inner product in $\mathbb{R}^n$ by $\langle x,y\rangle= x^T y$, and denote the Euclidean norm by $\|x\| = \sqrt{x^T x}$. We use $e_i$ to denote the vector that has only one non-zero entry $1$ at its $i$-th coordinate, and use $I_d$ to denote the $d\times d$ identity matrix. The unit sphere in $\mathbb{R}^d$ will be denoted by $\mathbb{S}_d\coloneqq \{x\in\mathbb{R}^d: \|x\|=1\}$. For any set $\mathcal{S}$, $|\mathcal{S}|$ denotes its cardinality.

\section{Problem Formulation}
In this paper, we consider finding a solution to the following optimization problem:
\begin{equation}\label{eq:formulation}
    \min_{x\in \mathbb{R}^d}\ F(x)\stackrel{\mathrm{def}}{=}f(x)+\psi(x).
\end{equation}
Here $ f:\mathbb{R}^d \rightarrow \mathbb{R}$ is a convex and differentiable function, and $\psi : \mathbb{R}^d \rightarrow \mathbb{R} \cup \{+\infty \}$ is convex and lower semicontinuous but possibly non-differentiable. We impose the following assumptions on the information that can be accessed by the decision maker:
\begin{itemize}
\item The decision maker may only obtain the value of the function $f$ by querying a black-box, or a zeroth-order oracle, which returns the value $f(x)$ whenever a point $x\in\mathbb{R}^d$ is fed into the black-box as its input. The decision maker does not have access to the derivatives of any order of $f$.
\item The decision maker is able to efficiently evaluate the following proximal operator
\[
\operatorname{prox}_{\eta\psi}(x) \coloneqq
\argmin_{y\in\mathbb{R}^d}\left\{
\frac{1}{2}\|x-y\|^2 + \eta\psi(y)
\right\}
\]
for any $\eta>0$.
\end{itemize}
The first assumption, as we have shown in Section~\ref{section:introduction}, is relevant to many problems in areas such as signal processing and machine learning, and necessitates the development of a zeroth-order optimization algorithm. The second assumption is standard in composite optimization and has been imposed in relevant studies such as~\cite{huang2019faster,pougkakiotis2023zeroth}, and allows us to incorporate simple nonsmoothness into our problem settings and algorithm design. For example, if we take $\psi$ to the indicator function of a closed and convex set $\mathcal{X}\subseteq\mathbb{R}^d$:
\[
\psi(x) = I_{\mathcal{X}}(x) = \begin{cases}
0, & \text{if}\ x\in\mathcal{X}, \\
+\infty, & \text{otherwise},
\end{cases}
\]
Then the main problem~\eqref{eq:formulation} is equivalent to the constrained optimization problem
$
\min_{x\in\mathcal{X}} f(x)
$,
and the proximal operator reduces to the projection operator onto the convex set $\mathcal{X}$. We see that our problem formulation includes constrained optimization as a special case.

We also make the following technical assumptions that will facilitate convergence analysis of our algorithm:
\begin{assumption}
\begin{enumerate}
\item The function $f$ is $L$-smooth on $\mathbb{R}^d$ for some $L>0$, i.e.,
\[
\|\nabla f(x)-\nabla f(y)\|\leq L\|x-y\|,\qquad\forall x,y\in\mathbb{R}^d.
\]
\item There exist some $\mu_f \geq 0$ and $\mu_\psi\geq 0$ such that for all $x, y \in R^d$,
\begin{align*}
f(x) & \geq f(y)+\langle\nabla f(y),x-y\rangle+\frac{\mu_f}{2}\|x-y\|^2, \\
\psi(x) & \geq \psi(y)+\langle v,x-y\rangle+\frac{\mu_{\psi}}{2}\|x-y\|^2,
\end{align*}
where $v$ is any subgradient of $\psi$ at $y$.

\item $\mu \coloneqq \mu_f+\mu_\psi>0$, so that $F$ is $\mu$-strongly convex.
\end{enumerate}
\end{assumption}

The strongly convex setting will be the main focus of this paper, and investigation of the non-strongly convex setting has been our ongoing work. But we expect that the algorithm design techniques (presented in Section~\ref{section:algorithm_design}) can also be adapted to non-strongly convex problems.

\subsection{Gaps in the Algorithm Design}
\label{subsection:gaps}

When $\psi=0$, i.e., the main problem~\eqref{eq:formulation} is an unconstrained smooth convex problem, we can find zeroth-order algorithms in existing literature that can solve~\eqref{eq:formulation} with satisfactory performance. For example, \cite{nesterov_random_2017,duchi2015optimal,shamir2017optimal} investigated the following 2-point zeroth-order gradient estimator:
\begin{equation}\label{eq:two_point_estimator}
\mathsf{G}^{(2)}_f(x;u,\beta)
=d\cdot \frac{f(x+\beta u)-f(x)}{\beta} u,
\end{equation}
where $\beta>0$ is a parameter called the \emph{smoothing radius}, and $u$ is a \emph{random perturbation direction} either sampled from the normal distribution $\mathcal{N}(0,d^{-1}I_d)$ or the uniform distribution on the unit sphere $\mathcal{U}(\mathbb{S}_d)$. It has been shown that for unconstrained problems, by combining the gradient estimator~\eqref{eq:two_point_estimator} with certain first-order frameworks, we can obtain zeroth-order algorithms with convergence guarantees that are similar to the first-order counterparts. Particularly, \cite{nesterov_random_2017} showed that, if we replace the true gradient in Nesterov's accelerated gradient descent by the 2-point gradient estimator~\eqref{eq:two_point_estimator}, as long as the algorithmic parameters are adjusted accordingly, the resulting algorithm can successfully solve an unconstrained $L$-smooth and $\mu$-strongly convex problem with oracle complexity upper bounded by
\[
O\!\left(
d\sqrt{\frac{L}{\mu}}\ln\frac{1}{\epsilon}
\right),
\]
which agrees with the first-order Nesterov's accelerated gradient descent method except for an $O(d)$ factor.

On the other hand, if we shift our focus to constrained or composite convex problems, we soon find that straightforward combination of the 2-point gradient estimator $\mathsf{G}^{(2)}_f(x;u,\beta)$ with a first-order optimization scheme will result in slower convergence and higher complexity than in the unconstrained setting; see Section~\ref{subsection:naive_combination} for a numerical example. The recent study~\cite{jin2023zeroth} suggests that slower convergence of such algorithms is related to the fact that
\[
\mathbb{E}\!\left[
\big\|\mathsf{G}^{(2)}_f(x;u,\beta)-\nabla f(x)\big\|^2
\right]
\sim d\|\nabla f(x)\|^2
\]
assuming $\beta>0$ sufficiently small; here the expectation is taken with respect to $u\sim\mathcal{N}(0,I_d)$ or $u\sim\mathcal{U}(\mathbb{S}_d)$. For unconstrained smooth problems, as the algorithm's iterate $x^k$ approaches an optimal point, we have $\nabla f(x^k)\rightarrow 0$, and consequently $\mathbb{E}\!\left[
\big\|\mathsf{G}^{(2)}_f(x^k;u,\beta)-\nabla f(x^k)\big\|^2
\right]\approx 0$, indicating that the randomness of $\mathsf{G}^{(2)}_f(x^k;u,\beta)$ will have a diminishing effect and thus the algorithm's behavior resembles a deterministic first-order method. However, for constrained or composite problems, the gradient at an optimal point may not vanish. As a result, as the iterate $x^k$ approaches an optimal point, the quantity $\mathbb{E}\!\left[
\big\|\mathsf{G}^{(2)}_f(x^k;u,\beta)-\nabla f(x^k)\big\|^2
\right]$ in general will not be close to zero, indicating that the algorithm's behavior resembles a stochastic first-order algorithm, which has strictly inferior convergence behavior.

There are also existing works (including \cite{pmlr-v89-sahu19a,ji2019improved,liu2024inexact}, etc.) that propose the following $(d+1)$-point gradient estimator (or its variants) for solving constrained convex problems:
\begin{equation}\label{eq:2d_point_estimator}
\sum_{i=1}^d
\frac{f(x+\beta e_i)-f(x)}{2\beta}e_i.
\end{equation}
However, each computation of the gradient estimator~\eqref{eq:2d_point_estimator} then requires $O(d)$ evaluations of the function $f$, which may not be desired when the problem dimension $d$ is high, as argued by~\cite{pmlr-v202-ren23b}. Besides, for constrained composite convex optimization problems, convergence results for accelerated zeroth-order algorithm using $O(d)$-point gradient estimators are yet to be established.

The above discussion demonstrates that there still exists a gap between constrained/composite zeroth-order methods and unconstrained zeroth-order methods, especially if we insist on employing gradient estimators that can be constructed by only $O(1)$ function queries. On the other hand, the above analysis also suggests that the variance of the zeroth-order gradient estimator plays a key role in the algorithm's convergence behavior, which inspires us that we may consider incorporating variance reduction techniques into zeroth-order optimization, to bridge the aforementioned gap. Later, we shall see that, our proposed algorithm provides a promising approach towards bridging this gap: It achieves accelerated linear convergence for composite smooth and strongly convex problems, while the construction of each gradient estimator requires $O(1)$ function queries \emph{on average}.

\section{Our Algorithm}
\label{section:algorithm_design}

In this section, we present our proposed algorithm, the Zeroth-Order loopless Katyusha (\textit{ZO-L-Katyusha}) algorithm, while is presented in Algorithm~\ref{alg:zol_katyusha}.

\begin{algorithm}[!tbh]
\DontPrintSemicolon
\SetAlgoLined
\LinesNumbered
\SetKwInput{KwParam}{Parameters}
\SetKwInput{KwInit}{Initialization}
\KwParam{Stepsize parameters $\theta \in (0, 1)$, $M>0$; batch size $|\mathcal{S}| \leq d$; smoothing radius $\beta>0$; probability $p \in (0, 1]$}
\KwInit{$\sigma=\mu_f/M$, $\eta = 1/(3\theta)$, $y^0=z^0=w^0\in\mathbb{R}^d$ }
\For{$k=0,1,2,\ldots$}{
$\displaystyle x^k = \theta z^k + \mfrac{1}{2} w^k + (\mfrac{1}{2}-\theta)y^k$\;
\vspace{3pt}

Generate a set of random vectors $\mathcal{S}_k$ by
\vspace{3pt}

\nonl\begin{minipage}{13.5cm}
\begin{itemize}
\item \textbf{Option I}: Drawing $|\mathcal{S}|$ samples uniformly from $\{e_1, e_2, ..., e_d\}$ without replacement

\item \textbf{Option II}: Drawing $|\mathcal{S}|$ samples independently from $\mathcal{U}(\mathbb{S}_d)$, the uniform distribution over the unit sphere
\end{itemize}
\end{minipage}
\vspace{3pt}

\nl $g^k \!=\! \hat{\nabla}_{\mathcal{S}_k}f(x^k) - 
\mfrac{1}{|\mathcal{S}_k|}
\!\sum\limits_{u\in\mathcal{S}_k}
\!\!\big\langle \hat{\nabla} f(w^k),u\big\rangle u
+ \hat{\nabla}\!f(w^k)$\;
\vspace{3pt}

$\displaystyle z^{k+1}=\operatorname{prox}_{\frac{\eta}{(1+\eta\sigma)M}\psi}\!\bigg(\frac{\eta\sigma x^{k}+z^{k}-\frac{\eta}{M}g^{k}}{1+\eta\sigma}\bigg)$\;
\vspace{3pt}

$y^{k+1} = x^k + \theta (z^{k+1} - z^k)$\;

$w^{k+1} = 
            \begin{cases}
                y_k, & \text{with probability } p\\
                w_k, & \text{with probability } 1-p
            \end{cases}$\;
}
\caption{ZO-L-Katyusha}\label{alg:zol_katyusha}
\end{algorithm}

The algorithm generally follows the framework of loopless Katyusha~\cite{kovalev2020don}: We keep a reference point $w^k$ at which we evaluate a relatively accurate estimation of the gradient $\hat{\nabla} f(w^k)$, and use $\hat{\nabla} f(w^k)$ to adjust the subsequent estimated gradients. The reference point $w^k$ will be updated in a random fashion as proposed by~\cite{kovalev2020don}. However, the major difference is that we use variance reduction for handling the randomness of the two-point zeroth-order gradient estimator, rather than deadling with the finite-sum problem structure. Specifically, we set the adjusted gradient estimator to be
\begin{equation}\label{eq:g_k_def}
g^k = \hat{\nabla}_{\mathcal{S}_k}f(x^k) - 
\frac{1}{|\mathcal{S}_k|}
\sum_{u\in\mathcal{S}_k}
d\left\langle \hat{\nabla} f(w^k), u\right\rangle u
+ \hat{\nabla}f(w^k),
\end{equation}
where we denote
\begin{align}
\hat{\nabla}_{\mathcal{S}}f(x) 
& =
\frac{1}{|\mathcal{S}|}\sum_{u \in \mathcal{S}} d\cdot\frac{f(x+\beta u) - f(x)}{\beta}u, \label{eq:2_point_algorithm} \\
\hat{\nabla}f(x) & =
\sum_{i=1}^d \frac{f(x+\beta e_i) - f(x)}{\beta }e_i.
\end{align}
In~\eqref{eq:g_k_def}, each $\mathcal{S}_k$ is a randomly generated set of identically distributed perturbation directions for $|\mathcal{S}_k|$ 2-point gradient estimators, which will be averaged to form $\hat{\nabla}_{\mathcal{S}_k} f(x^k)$ as shown in~\eqref{eq:2_point_algorithm}. For simplicity, we employ the same batch size (which we denote by $|\mathcal{S}|$) for every $\mathcal{S}_k$. Unlike variance reduction algorithms for finite-sum problems, here we choose to draw $|\mathcal{S}_k|$ random directions, rather than drawing individual loss functions. The reference gradient $\hat{\nabla} f(w^k)$, on the other hand, is generated by the $(d+1)$-point gradient estimator, giving sufficiently accurate approximation to $\nabla f(w^k)$ when the smoothing radius $\beta$ is chosen properly.

We offer two sampling strategies for the set $\mathcal{S}_k$, as presented in Algorithm~\ref{alg:zol_katyusha}: The first option is to draw $S$ vectors from $\{e_1, e_2,\ldots, e_d\}$ \emph{without replacement}, i.e., we choose the $|\mathcal{S}|$ random perturbation directions to be distinct and parallel to the coordinate axes; as an extreme case, when $|\mathcal{S}|=d$, we get a $(d+1)$-point gradient estimator. The second option is to draw $|\mathcal{S}|$ vectors independently from $\mathcal{U}(\mathbb{S}_d)$, the uniform distribution on the unit sphere. We shall see that these two sampling strategies lead to largely similar performance guarantees when $|\mathcal{S}|\ll d$.

Algorithm~\ref{alg:zol_katyusha} is a relatively general zeroth-order algorithm with several tunable parameters. In Section~\ref{section:convergence_analysis}, we will show how these parameters should generally be chosen to guarantee convergence. Here, we discuss some special choices of the parameters:
\begin{itemize}
\item Suppose we set $|\mathcal{S}|=1$ and $p=(\gamma d)^{-1}$ for some $\gamma>0$, then on average we update $w^k$ after approximately $\gamma d$ iterations. Since each reference gradient $\hat{\nabla } f(w^k)$ requires $d+1$ function queries, while each $\hat{\nabla}_{\mathcal{S}_k} f(x^k)$ requires $2$ function queries, we see that the averaged number of function queries needed per zeroth-order gradient estimation is
\[
\frac{(d+1)\times 1 + 2\times (\gamma d)}{1+\gamma d} = O(1).
\]
We shall see in Section~\ref{section:convergence_analysis} that such choice of $|\mathcal{S}|$ and $p$ guarantees convergence as long as other parameters are selected accordingly. This justifies our previous claim that, in \emph{ZO-L-Katyusha}, the number of function queries for the construction of each gradient estimator can be made to be $O(1)$ on average.

\item If we use the first sampling strategy, and set the mini-batch size $|\mathcal{S}| = d$ and probability $p=1$, then \textit{ZO-L-Katyusha} reduces to an accelerated zeroth-order algorithm using purely $(d+1)$-point gradient estimators, with $w^{k+1}=y^k$ and $g^k = \hat{\nabla}f(x^k)$.
We can see that \emph{ZO-L-Katyusha} provides abundant flexibility which allows the algorithm to be applicable to a wide range of practical scenarios.
\end{itemize}

\section{Convergence Analysis}
\label{section:convergence_analysis}

In this section, we present our theoretical results on the convergence rate of the proposed \emph{ZO-L-Katyusha} algorithm. We first present a main theorem that covers general sampling strategies and parameter choices. Then we provide corollaries on the algorithm's oracle complexities for some special cases. The proof of the main theorem will be postponed to Section~\ref{subsection:proof_outline}.

\begin{theorem}
\label{theorem:main}
Let $x^\ast\in\mathbb{R}^d$ be the unique optimal solution to~\eqref{eq:formulation}. Denote
\[
A = \begin{cases}
\displaystyle\max\left\{\frac{4d(d-|\mathcal{S}|)}{(d-1)|\mathcal{S}|},1\right\}, & \text{Option I is used},  \vspace{6pt}
\\
\displaystyle\frac{4 d}{|\mathcal{S}|}, &
\text{Option II is used},
\end{cases}
\]
and suppose in \emph{ZO-L-Katyusha}, we set
$M = (A+1)L/3$. Denote
\[
\begin{aligned}
\Psi^k \coloneqq{} &
\frac{\mu+3\theta M}{2}\|z^k-x^\ast\|^2
+\frac{1}{\theta}(F(y^k)-F(x^\ast))
+
\frac{1+\theta}{2p\theta}(F(w^k)-F(x^\ast)).
\end{aligned}
\]
Then for the corresponding sequence generated by ZO-L-Katyusha, we have
\[
\mathbb{E}\!\left[\Psi^k\right]
\leq 
(1-\Delta)^k \Psi^0 + \frac{\beta^2 d^2 L}{\Delta}
\left(\frac{L}{d\mu}+\frac{1}{A\theta}\right),
\]
where
\[
\Delta = 
\min\left\{
\frac{\mu}{2\mu+6\theta M},
\frac{\theta}{2},
\frac{p\theta}{1+\theta}
\right\}.
\]
\end{theorem}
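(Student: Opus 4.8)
The plan is to follow the Lyapunov-function template of loopless Katyusha~\cite{kovalev2020don}, isolating the only genuinely new ingredient --- a bias/variance analysis of the zeroth-order estimator $g^k$ --- into a separate lemma, and then feeding it into an otherwise standard one-step descent argument. Throughout I would condition on the filtration $\mathcal{F}_k$ generated by all randomness up to the start of iteration $k$, so that $x^k$ and $w^k$ are deterministic and only $\mathcal{S}_k$ is random. The first task is to record the mean of $g^k$. Both sampling schemes satisfy $\mathbb{E}_u[d\langle v,u\rangle u]=v$ (for $u\sim\mathcal{U}(\mathbb{S}_d)$ because $\mathbb{E}[uu^\top]=d^{-1}I_d$, and for $u$ uniform on $\{e_1,\dots,e_d\}$ by direct summation), so the control-variate term cancels the reference gradient in expectation and $\mathbb{E}_k[g^k]=\nabla f_\beta(x^k)$, where $\nabla f_\beta(x)\coloneqq\mathbb{E}_u[d\,\tfrac{f(x+\beta u)-f(x)}{\beta}u]$ is the expectation of the two-point estimator. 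A Taylor expansion of $f(x+\beta u)$ then gives the smoothing-bias bound $\|\nabla f_\beta(x^k)-\nabla f(x^k)\|\le\tfrac12\beta d L$, which will supply one of the two $\beta^2$ error channels.

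The heart of the proof is the variance bound. I would write $g^k-\mathbb{E}_k[g^k]$ as an average over $\mathcal{S}_k$ of the mean-zero single-direction terms $h_u=d\big[\tfrac{f(x^k+\beta u)-f(x^k)}{\beta}-\langle\hat{\nabla} f(w^k),u\rangle\big]u$, Taylor-expand to reduce each to $d\langle\nabla f(x^k)-\hat{\nabla} f(w^k),u\rangle u$ plus an $O(\beta L)$ remainder, and apply $\mathbb{E}_u[\langle v,u\rangle^2]=\|v\|^2/d$. This produces $\mathbb{E}_u\|h_u\|^2\lesssim d\|\nabla f(x^k)-\nabla f(w^k)\|^2+O(\beta^2 d^2L^2)$; co-coercivity of the $L$-smooth convex $f$ turns the first term into the Bregman divergence $D_f(x^k,w^k)=f(x^k)-f(w^k)-\langle\nabla f(w^k),x^k-w^k\rangle$. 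The distinction between the two options enters only through the variance of the empirical average: sampling without replacement (Option~I) carries the finite-population correction $\tfrac{d-|\mathcal{S}|}{d-1}$, while i.i.d.\ sampling (Option~II) carries the plain $1/|\mathcal{S}|$, and these are exactly what make the resulting constant equal to the stated $A$. The upshot is a bound of the form
\[
\mathbb{E}_k\big[\|g^k-\nabla f(x^k)\|^2\big]\le A\,L\,D_f(x^k,w^k)+O(\beta^2 d^2L^2),
\]
where the $\beta^2$ remainder also absorbs the $(d+1)$-point bias $\|\hat{\nabla} f(w^k)-\nabla f(w^k)\|^2\le\tfrac14\beta^2 d L^2$.

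Next I would assemble the one-step recursion. The optimality condition of the proximal $z$-update yields the mirror-descent three-point inequality bounding $\langle g^k,z^{k+1}-x^*\rangle+\psi(z^{k+1})-\psi(x^*)$ by $\tfrac{(1+\eta\sigma)M}{2\eta}\|z^k-x^*\|^2-\tfrac{(1+\eta\sigma)M}{2\eta}\|z^{k+1}-x^*\|^2-\tfrac{M}{2\eta}\|z^{k+1}-z^k\|^2$ together with a $\tfrac{\sigma M}{2}\|x^k-x^*\|^2$ strong-convexity slack. Applying $L$-smoothness to $y^{k+1}=x^k+\theta(z^{k+1}-z^k)$ gives $f(y^{k+1})\le f(x^k)+\theta\langle\nabla f(x^k),z^{k+1}-z^k\rangle+\tfrac{L\theta^2}{2}\|z^{k+1}-z^k\|^2$. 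I would add these with weight $1/\theta$ on the smoothness line, take $\mathbb{E}_k$, replace $g^k$ by $\nabla f(x^k)$ via the estimator lemma (the variance feeding the $\|z^{k+1}-z^k\|^2$ and $D_f(x^k,w^k)$ channels and the bias absorbed by Young's inequality into the $\beta^2$ error), and convert the remaining linearizations into the gaps $F(y^{k+1})-F(x^*)$, $F(x^k)-F(x^*)$, $F(w^k)-F(x^*)$ using convexity of $f$, convexity of $\psi$ applied through the convex combination $y^{k+1}=\theta z^{k+1}+\tfrac12 w^k+(\tfrac12-\theta)y^k$, $\mu$-strong convexity of $F$, and the coupling $x^k=\theta z^k+\tfrac12 w^k+(\tfrac12-\theta)y^k$. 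Finally the loopless update $w^{k+1}=y^k$ with probability $p$ gives $\mathbb{E}_k[F(w^{k+1})]=pF(y^k)+(1-p)F(w^k)$, which lets the $F(w)$ reservoir in $\Psi$ contract while supplying the budget that offsets the $A\,L\,D_f(x^k,w^k)$ variance term. The result is the contraction $\mathbb{E}_k[\Psi^{k+1}]\le(1-\Delta)\Psi^k+\beta^2 d^2L\big(\tfrac{L}{d\mu}+\tfrac{1}{A\theta}\big)$, after which taking total expectations and summing $\sum_{j\ge0}(1-\Delta)^j=1/\Delta$ gives the stated bound.

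I expect the main obstacle to be the bookkeeping in this combination step, and in particular forcing the net coefficient of $\|z^{k+1}-z^k\|^2$ to be nonpositive. After summing, this coefficient collects the $-\tfrac{M}{2\eta}$ from the prox step, the $+\tfrac{L\theta}{2}$ from smoothness (after the $1/\theta$ weighting), and the contribution of the estimator variance, which carries the factor $A$; it is precisely the requirement that these cancel that pins down $M=(A+1)L/3$ and $\eta=1/(3\theta)$ and, simultaneously, calibrates how much of $D_f(x^k,w^k)$ can be charged to $F(w^k)-F(x^*)$ so as to match the weight $\tfrac{1+\theta}{2p\theta}$ in $\Psi$. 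Verifying that the three terms defining $\Delta$ are exactly the contraction rates of the three components of $\Psi$ --- the $\tfrac{\mu}{2\mu+6\theta M}$ rate for the $z$-distance, $\tfrac{\theta}{2}$ for the $y$-gap, and $\tfrac{p\theta}{1+\theta}$ for the $w$-gap --- is then a matter of matching coefficients.
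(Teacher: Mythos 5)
Your route is essentially the paper's: the same loopless-Katyusha Lyapunov template, with bias/variance lemmas for $g^k$, the prox three-point inequality, the smoothness bound along $y^{k+1}=x^k+\theta(z^{k+1}-z^k)$, Jensen's inequality for $\psi$ through the convex combination, the identity $\mathbb{E}[\,F(w^{k+1})\mid\mathcal{F}_k]=pF(y^k)+(1-p)F(w^k)$, and geometric unrolling; your calibration of $M=(A+1)L/3$ and $\eta=1/(3\theta)$ (so that $\frac{\eta AL}{2(M-L\eta\theta)}=\frac{1}{2\theta}$) is exactly what the paper does. However, there is one concrete gap: your bias bound. You take $\|\mathbb{E}[g^k\mid\mathcal{F}_k]-\nabla f(x^k)\|\le\frac12\beta dL$, which is the per-direction error of a single two-point estimator, not the bias of the averaged estimator. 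The bias enters the main argument through Young's inequality against $\frac{\mu}{4}\|x^\ast-z^k\|^2$, producing the error channel $\frac{1}{\mu}\|\mathbb{E}[g^k\mid\mathcal{F}_k]-\nabla f(x^k)\|^2$; the theorem's constant $\beta^2 d^2L\cdot\frac{L}{d\mu}=\frac{\beta^2 dL^2}{\mu}$ therefore requires squared bias $O(\beta^2L^2d)$, whereas your bound gives $\frac{\beta^2d^2L^2}{4\mu}$ --- larger by a factor of order $d$, so the stated bound is not recovered. The fix is what the paper's Lemmas 1 and 2 exploit: for Option I, $\mathbb{E}[g^k\mid\mathcal{F}_k]=\hat\nabla f(x^k)$ and the $d$ coordinate errors are \emph{orthogonal}, so $\|\hat\nabla f(x^k)-\nabla f(x^k)\|^2=\frac{1}{d^2}\sum_{i=1}^d\|\hat\nabla_{e_i}f(x^k)-\nabla_{e_i}f(x^k)\|^2\le\frac14L^2\beta^2 d$; for Option II one uses the sharper smoothing bound $\|\mathbb{E}[\hat\nabla_uf(x)]-\nabla f(x)\|\le\beta L$ for $u\sim\mathcal{U}(\mathbb{S}_d)$ (the paper's Lemma 6), rather than pushing the norm inside the expectation.

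Two secondary bookkeeping issues, both repairable. First, the three-point inequality you display carries the same coefficient $\frac{(1+\eta\sigma)M}{2\eta}$ on $\|z^k-x^\ast\|^2$ and $\|z^{k+1}-x^\ast\|^2$; as written it yields no contraction of the $z$-component. The correct form (the paper's Lemma 4) is asymmetric, $\frac{M}{2\eta}\|z^k-x^\ast\|^2$ versus $\frac{M+\eta\mu}{2\eta}\|z^{k+1}-x^\ast\|^2$, where the full $\mu=\mu_f+\mu_\psi$ appears because the $\mu_\psi$-strong convexity of $\psi$ is also invoked at $z^{k+1}$; this asymmetry is precisely the source of the rate $\frac{\mu}{2\mu+6\theta M}$ that you correctly list. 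Second, the Bregman divergence in the variance bound must be oriented with the gradient at $x^k$, i.e.\ $f(w^k)-f(x^k)-\langle\nabla f(x^k),w^k-x^k\rangle$ (co-coercivity supplies either orientation, so this costs nothing), because this orientation is what cancels the coupling term $\frac{1}{2\theta}\langle\nabla f(x^k),x^k-w^k\rangle$ arising from $x^k=\theta z^k+\frac12w^k+(\frac12-\theta)y^k$, leaving the clean $-\frac{1}{2\theta}(f(w^k)-f(x^k))$ that is charged to the $\mathcal{W}$-reservoir; with your orientation $f(x^k)-f(w^k)-\langle\nabla f(w^k),x^k-w^k\rangle$ the cancellation does not close, since no useful lower bound on $\langle\nabla f(w^k),x^k-w^k\rangle$ is available.
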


The convergence guarantees presented in Theorem~\ref{theorem:main} is quite general, and applies to arbitrary choices of $\theta\in(0,1)$, $|\mathcal{S}|\leq d$, $\beta>0$ and $p\in(0,1]$. Next we provide corollaries for three special cases, with $\theta$ and $q$ fixed so that we can get concrete results on the complexity of our algorithm.

\begin{corollary}[Mini-batch, Option I]\label{coro:mini_batch}
Suppose $\mathcal{S}_k$ is drawn uniformly from $\{e_1, e_2, \ldots, e_d\}$ without replacement, with $|\mathcal{S}| \leq \sqrt{d}$. Given arbitrary $\epsilon>0$, let the parameters of \emph{ZO-L-Katyusha} satisfy
\[
\begin{aligned}
M & = \frac{4d(d-|\mathcal{S}|)L}{3(d-1)|\mathcal{S}|}+\frac{L}{3},
 & \quad
\theta & = \min\left\{\sqrt{\frac{d\mu}{M}}, \frac{1}{2}\right\}, &\quad
\beta & =
O\!\left(\sqrt{\frac{\mu \epsilon}{d^{\frac{3}{2}}L^2}} \right),
&\quad p&=\frac{1}{d}.
\end{aligned}
\]
Then we achieve $\mathbb{E}\!\left[F(w^k) - F(x^\ast)\right] \leq \epsilon$ for
\[
k \geq O\!\left(d\sqrt{\frac{L}{\mu|\mathcal{S}|}}\,\ln\frac{1}{\epsilon}\right).
\]
\end{corollary}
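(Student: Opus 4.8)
The plan is to specialize the master inequality of Theorem~\ref{theorem:main} to the parameter values in the corollary, split the resulting bound into a transient (geometric) part and a steady-state (bias) part, and then tune $\beta$ so that the bias part is negligible while reading off the iteration count from the geometric part.

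First I would extract a bound on the suboptimality from $\Psi^k$. Since $x^\ast$ is the minimizer we have $F(y^k)\ge F(x^\ast)$ and $F(w^k)\ge F(x^\ast)$, and $\mu+3\theta M>0$, so every term in $\Psi^k$ is nonnegative; keeping only the third term gives $\frac{1+\theta}{2p\theta}(F(w^k)-F(x^\ast))\le\Psi^k$, i.e. $\mathbb{E}[F(w^k)-F(x^\ast)]\le\frac{2p\theta}{1+\theta}\mathbb{E}[\Psi^k]$. Combining with Theorem~\ref{theorem:main},
\[
\mathbb{E}[F(w^k)-F(x^\ast)]\le \frac{2p\theta}{1+\theta}(1-\Delta)^k\Psi^0+\frac{2p\theta}{1+\theta}\cdot\frac{\beta^2 d^2 L}{\Delta}\left(\frac{L}{d\mu}+\frac{1}{A\theta}\right).
\]
Next I would record the orders of the parameters. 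Under $|\mathcal{S}|\le\sqrt d$ one has $A=\frac{4d(d-|\mathcal{S}|)}{(d-1)|\mathcal{S}|}=\Theta(d/|\mathcal{S}|)\ge 1$, so $M=(A+1)L/3=\Theta(dL/|\mathcal{S}|)$, and in the binding regime $\theta=\sqrt{d\mu/M}$ this gives $\theta=\Theta(\sqrt{\mu|\mathcal{S}|/L})$.

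The crucial step is estimating $\Delta=\min\{\frac{\mu}{2\mu+6\theta M},\frac{\theta}{2},\frac{p\theta}{1+\theta}\}$ with $p=1/d$. Here I would use $\theta M=\sqrt{d\mu M}=\Theta(d\sqrt{\mu L/|\mathcal{S}|})$, which dominates $2\mu$, so the first term is $\Theta(\frac1d\sqrt{\mu|\mathcal{S}|/L})$; the third term is likewise $\Theta(\theta/d)=\Theta(\frac1d\sqrt{\mu|\mathcal{S}|/L})$, whereas the second term $\theta/2$ is larger by a factor $d$. Hence $\Delta=\Theta(\frac1d\sqrt{\mu|\mathcal{S}|/L})$, so $1/\Delta=O(d\sqrt{L/(\mu|\mathcal{S}|)})$, and moreover $\frac{2p\theta/(1+\theta)}{\Delta}=\Theta(1)$, which is exactly what keeps the bias prefactor bounded. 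Bounding the transient term via $(1-\Delta)^k\le e^{-\Delta k}$, it falls below $\epsilon/2$ as soon as $k\ge\frac1\Delta\ln\frac{4p\theta\Psi^0}{(1+\theta)\epsilon}=O(d\sqrt{L/(\mu|\mathcal{S}|)}\ln(1/\epsilon))$, the claimed rate. For the bias term, since $\frac{2p\theta/(1+\theta)}{\Delta}=\Theta(1)$ it equals $\Theta(1)\,\beta^2 d^2 L\,(\frac{L}{d\mu}+\frac{1}{A\theta})$; using $\frac{L}{d\mu}\le\frac{L}{\mu\sqrt d}$ and $\frac{1}{A\theta}=\Theta(\frac1d\sqrt{L|\mathcal{S}|/\mu})\le\frac{L}{\mu\sqrt d}$ (where the hypotheses $|\mathcal{S}|\le\sqrt d$ and $\mu\le L$ dominate the second contribution) gives a bias of order $\beta^2 d^{3/2}L^2/\mu$, which is at most $\epsilon/2$ precisely for $\beta=O(\sqrt{\mu\epsilon/(d^{3/2}L^2)})$. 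Adding the two halves yields $\mathbb{E}[F(w^k)-F(x^\ast)]\le\epsilon$ for the stated $k$.

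The main obstacle I anticipate is the bookkeeping in the estimation of $\Delta$: one must verify that the minimum is attained, up to constants, by the first/third candidates rather than by $\theta/2$, since this is what pins the iteration count to $d\sqrt{L/(\mu|\mathcal{S}|)}$ and not something larger. A secondary nuisance is the corner case $\theta=\tfrac12$ in the definition $\theta=\min\{\sqrt{d\mu/M},\tfrac12\}$, which changes the orders of $\theta M$ and of all three candidate expressions and therefore has to be checked separately (in that well-conditioned regime the rate only improves, so the stated bound still holds as an upper bound). Getting these constants and regime distinctions right, together with the coarse bound $\frac{L}{d\mu}+\frac{1}{A\theta}=O(L/(\mu\sqrt d))$ that produces the admissible magnitude $\beta=O(\sqrt{\mu\epsilon/(d^{3/2}L^2)})$, is the bulk of the work.
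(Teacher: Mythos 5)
Your proposal is correct and takes essentially the same route as the paper, which states the corollary without an explicit proof and leaves exactly this specialization of Theorem~\ref{theorem:main} to the reader: nonnegativity of the terms in $\Psi^k$ to extract $\mathbb{E}[F(w^k)-F(x^\ast)]\le\frac{2p\theta}{1+\theta}\mathbb{E}[\Psi^k]$, the estimate $\Delta=\Theta\bigl(\tfrac{1}{d}\sqrt{\mu|\mathcal{S}|/L}\bigr)$ with the minimum attained by the first/third candidates, the observation that $\frac{p\theta/(1+\theta)}{\Delta}=\Theta(1)$ keeps the bias prefactor bounded, and the choice of $\beta$ making the steady-state term at most $\epsilon/2$. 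Your bookkeeping in the binding regime $\theta=\sqrt{d\mu/M}$ matches what the theorem's constants give, and your flagging of the corner case $\theta=\tfrac12$ (where the stated complexity implicitly presumes $\mu|\mathcal{S}|=O(L)$) is if anything more careful than the paper itself.
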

\begin{corollary}[Mini-batch, Option II]\label{coro:mini_batch_2}
Suppose each vector in $\mathcal{S}_k$ is drawn independently from $\mathcal{U}(\mathbb{S}_d)$, the uniform distribution on the unit sphere, with $|\mathcal{S}| \leq \sqrt{d}$. Given arbitrary $\epsilon>0$, let the parameters of \emph{ZO-L-Katyusha} satisfy
\begin{align*}
M & = \frac{4dL}{|\mathcal{S}|}+\frac{L}{3},
 & \quad
\theta & = \min\left\{\sqrt{\frac{d\mu}{M}}, \frac{1}{2}\right\}, &\quad
\beta & =
O\!\left(\sqrt{\frac{\mu \epsilon}{d^{\frac{3}{2}}L^2}} \right),
&\quad
p& =\frac{1}{d}.
\end{align*}
Then we achieve $\mathbb{E}\!\left[F(w^k) - F(x^\ast)\right] \leq \epsilon$ for
\[
k \geq O\!\left(d\sqrt{\frac{L}{\mu|\mathcal{S}|}}\,\ln\frac{1}{\epsilon}\right).
\]
\end{corollary}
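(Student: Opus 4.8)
The plan is to obtain the corollary as a direct specialization of Theorem~\ref{theorem:main}. Since every summand defining $\Psi^k$ is nonnegative, the last term is dominated by the Lyapunov function, so $F(w^k)-F(x^\ast)\le \frac{2p\theta}{1+\theta}\Psi^k$; taking expectations and inserting the theorem's bound gives
\[
\mathbb{E}[F(w^k)-F(x^\ast)] \leq \frac{2p\theta}{1+\theta}(1-\Delta)^k\Psi^0 + \frac{2p\theta}{1+\theta}\cdot\frac{\beta^2 d^2 L}{\Delta}\left(\frac{L}{d\mu}+\frac{1}{A\theta}\right),
\]
with $A=4d/|\mathcal{S}|$. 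It then suffices to force each of the two terms on the right below $\epsilon/2$, which reduces the whole argument to estimating the contraction factor $\Delta$ and the steady-state (bias) term under the stated parameters.

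First I would pin down the orders of $\theta$ and $\Delta$. With $M=4dL/|\mathcal{S}|+L/3$ and $|\mathcal{S}|\le\sqrt{d}$, the term $4dL/|\mathcal{S}|\ge 4\sqrt{d}\,L$ dominates $L/3$, so $M=\Theta(dL/|\mathcal{S}|)$ and $d\mu/M=\Theta(\mu|\mathcal{S}|/L)$; in the ill-conditioned regime of interest this is below $1/4$, so the square-root branch is active and $\theta=\sqrt{d\mu/M}=\Theta\!\left(\sqrt{\mu|\mathcal{S}|/L}\right)$. Using $\theta M=\sqrt{d\mu M}$, I would compare the three candidates for $\Delta$: the middle one $\theta/2=\Theta\!\left(\sqrt{\mu|\mathcal{S}|/L}\right)$, whereas (with $p=1/d$ and $\theta\to0$) both $\frac{\mu}{2\mu+6\theta M}=\Theta\!\left(\frac{1}{d}\sqrt{\mu|\mathcal{S}|/L}\right)$ and $\frac{p\theta}{1+\theta}=\Theta\!\left(\frac{1}{d}\sqrt{\mu|\mathcal{S}|/L}\right)$ are smaller by a factor $d$. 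Hence $\Delta=\Theta\!\left(\frac{1}{d}\sqrt{\mu|\mathcal{S}|/L}\right)$ and $1/\Delta=\Theta\!\left(d\sqrt{L/(\mu|\mathcal{S}|)}\right)$.

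With $\Delta$ in hand, the transient term obeys $(1-\Delta)^k\le e^{-\Delta k}$, so requiring $\frac{2p\theta}{1+\theta}e^{-\Delta k}\Psi^0\le\epsilon/2$ gives $k\ge \frac{1}{\Delta}\ln\frac{4p\theta\Psi^0}{(1+\theta)\epsilon}=O\!\left(d\sqrt{L/(\mu|\mathcal{S}|)}\,\ln(1/\epsilon)\right)$, the parameter-dependent constants entering only inside the logarithm. For the bias term I would use that $\Delta$ is realized by one of the two $\Theta\!\left(\frac{1}{d}\sqrt{\mu|\mathcal{S}|/L}\right)$-scale candidates, so $\frac{2p\theta}{(1+\theta)\Delta}=\Theta(1)$; the remaining factor is $\beta^2 d^2 L\left(\frac{L}{d\mu}+\frac{1}{A\theta}\right)$, and with $A\theta=\Theta\!\left(\frac{d}{\sqrt{|\mathcal{S}|}}\sqrt{\mu/L}\right)$ this equals $\Theta\!\left(\frac{dL^2}{\mu}+\frac{d\sqrt{|\mathcal{S}|}L^{3/2}}{\sqrt{\mu}}\right)\beta^2$. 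Substituting $\beta^2=O\!\left(\mu\epsilon/(d^{3/2}L^2)\right)$ turns this into $O(\epsilon/\sqrt{d})$, hence below $\epsilon/2$. Combining the two bounds yields the stated iteration count.

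The main obstacle is the bookkeeping in the second step: correctly identifying which of the three competing quantities attains the minimum in $\Delta$ (the point being that $p=1/d$ drags the first and third candidates a full factor $d$ below $\theta/2$), and checking that the condition $|\mathcal{S}|\le\sqrt{d}$ is precisely what guarantees both $M=\Theta(dL/|\mathcal{S}|)$ and that the prescribed $\beta$ drives the bias term below $\epsilon$. One must also confirm the square-root branch of $\theta$ is the binding case (when the cap $\theta=1/2$ activates the problem is well-conditioned and the estimate only improves). Everything else is a routine substitution of the parameters into Theorem~\ref{theorem:main}, and the argument is identical in structure to Corollary~\ref{coro:mini_batch} with $A$ replaced by $4d/|\mathcal{S}|$.
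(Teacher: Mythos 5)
Your proposal is correct and takes essentially the same approach as the paper: the corollary is meant as a direct specialization of Theorem~\ref{theorem:main}, and your bookkeeping — extracting $\mathbb{E}[F(w^k)-F(x^\ast)]\le \frac{2p\theta}{1+\theta}\mathbb{E}[\Psi^k]$, showing $\Delta=\Theta\!\left(\frac{1}{d}\sqrt{\mu|\mathcal{S}|/L}\right)$ because $p=1/d$ pulls the first and third candidates a factor $d$ below $\theta/2$, and verifying that the prescribed $\beta$ drives the steady-state term below $\epsilon$ — is exactly the intended argument. The only wrinkle, which lies in the paper rather than in your proof, is that Theorem~\ref{theorem:main} formally requires $M=(A+1)L/3=\frac{4dL}{3|\mathcal{S}|}+\frac{L}{3}$ for Option II, whereas the corollary states $M=\frac{4dL}{|\mathcal{S}|}+\frac{L}{3}$; this factor-of-3 discrepancy does not affect any of the $\Theta(\cdot)$ estimates in your argument.
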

\begin{corollary}[Full batch, Option I]\label{coro:full_batch}
Suppose we choose $\mathcal{S}_k=\{e_1,e_2,\ldots,e_d\}$ for every $k\geq 0$. Given arbitrary $\epsilon>0$, let the parameters of \emph{ZO-L-Katyusha} satisfy
\[
\begin{aligned}
M & = \frac{2L}{3},
 & \quad
\theta & = \min\left\{\sqrt{\frac{\mu}{M}}, \frac{1}{2}\right\}, &\quad
\beta & =
O\!\left(\sqrt{\frac{\mu \epsilon}{d^2 L^2}} \right),
&\quad  p&=1.
\end{aligned}
\]
Then we achieve $\mathbb{E}\!\left[F(w^k) - F(x^\ast)\right] \leq \epsilon$ for
\[
k \geq O\!\left(\sqrt{\frac{L}{\mu}}\,\ln\frac{1}{\epsilon}\right).
\]
\end{corollary}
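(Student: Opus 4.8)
The plan is to instantiate Theorem~\ref{theorem:main} with the full-batch, Option~I parameters and then drive the geometric term and the smoothing-bias term separately below $\epsilon/2$. First I would substitute $|\mathcal{S}|=d$ into the Option~I expression for $A$: since $d-|\mathcal{S}|=0$, the first branch of the maximum vanishes and $A=1$, so the prescription $M=(A+1)L/3$ indeed yields $M=2L/3$ as stated. I would also record the simplification peculiar to this regime: with $\mathcal{S}_k=\{e_1,\dots,e_d\}$ we have $\hat{\nabla}_{\mathcal{S}_k}f(x^k)=\hat{\nabla}f(x^k)$ and the correction sum collapses to $\hat{\nabla}f(w^k)$, so $g^k=\hat{\nabla}f(x^k)$ is the deterministic $(d+1)$-point estimator; combined with $p=1$ (hence $w^{k+1}=y^k$), the whole iteration is deterministic and the expectations in the theorem may simply be dropped.

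Next I would pin down the contraction rate $\Delta$. Writing $\theta=\min\{\sqrt{\mu/M},1/2\}$, I would split into two cases. In the ill-conditioned case $\theta=\sqrt{\mu/M}=\sqrt{3\mu/(2L)}$, so $6\theta M=6\mu/\theta$ and the first candidate in $\Delta$ equals $\mu/(2\mu+6\theta M)=\theta/(2\theta+6)$; since $\theta\le\tfrac12$ this lies in $[\theta/7,\theta/6]$, which is smaller than both $\theta/2$ and $p\theta/(1+\theta)=\theta/(1+\theta)$, so $\Delta=\theta/(2\theta+6)=\Theta(\theta)=\Theta(\sqrt{\mu/L})$ and $1/\Delta=O(\sqrt{L/\mu})$. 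In the well-conditioned case $\theta=\tfrac12$ (equivalently $L<6\mu$), $\Delta$ is a positive constant and $\sqrt{L/\mu}=\Theta(1)$, so $1/\Delta=O(1)=O(\sqrt{L/\mu})$ holds trivially. Using $(1-\Delta)^k\le e^{-\Delta k}$, taking $k\ge\Delta^{-1}\ln(2\Psi^0/\epsilon)=O(\sqrt{L/\mu}\,\ln(1/\epsilon))$ forces $(1-\Delta)^k\Psi^0\le\epsilon/2$, which is exactly the advertised iteration count.

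It then remains to choose $\beta$ so that the additive term $\tfrac{\beta^2 d^2 L}{\Delta}\bigl(\tfrac{L}{d\mu}+\tfrac1\theta\bigr)\le\epsilon/2$ (recall $A=1$). Substituting $1/\Delta\le 7/\theta$ and $\theta^2=3\mu/(2L)$, the contribution of the $1/\theta$ summand scales like $\Theta(\beta^2 d^2 L\cdot L/\mu)$, which is at most $\epsilon/2$ precisely when $\beta^2=O(\mu\epsilon/(d^2L^2))$, matching $\beta=O(\sqrt{\mu\epsilon/(d^2L^2)})$. Finally, since every summand of $\Psi^k$ is nonnegative, the definition gives $F(w^k)-F(x^\ast)\le\tfrac{2p\theta}{1+\theta}\Psi^k\le\Psi^k$; combining the geometric and bias halves yields $\mathbb{E}[F(w^k)-F(x^\ast)]\le\mathbb{E}[\Psi^k]\le\epsilon$ for the stated $k$.

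I expect the main obstacle to be the bias-term bookkeeping in the last paragraph. One must verify that among the two contributions $\tfrac{L}{d\mu}$ and $\tfrac1\theta$ inside the parentheses the second is the binding one (their ratio is $\Theta(d\sqrt{\mu/L})$, so the $1/\theta$ term dominates once $d\gtrsim\sqrt{L/\mu}$), so that the clean requirement $\beta=O(\sqrt{\mu\epsilon/(d^2L^2)})$ is what the theorem actually forces; one must also keep the constants from $\Delta\in[\theta/7,\theta/6]$ and $\theta^2=3\mu/(2L)$ consistent across the two regimes of $\theta$. By contrast, the $A=1$ specialization, the geometric-rate estimate, and the final reduction from $\Psi^k$ to $F(w^k)-F(x^\ast)$ are all routine once $\Delta=\Theta(\sqrt{\mu/L})$ has been established.
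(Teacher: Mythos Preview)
Your proposal is correct and follows exactly the intended route: the paper states Corollaries~\ref{coro:mini_batch}--\ref{coro:full_batch} without separate proofs, deriving them by direct specialization of Theorem~\ref{theorem:main}, and your computation of $A=1$, $M=2L/3$, $\Delta=\theta/(2\theta+6)=\Theta(\sqrt{\mu/L})$, and the reduction $F(w^k)-F(x^\ast)\le\frac{2p\theta}{1+\theta}\Psi^k$ are precisely the substitutions one needs. Your observation that the $L/(d\mu)$ summand can dominate the $1/\theta$ summand when $d\lesssim\sqrt{L/\mu}$, so that the stated order $\beta=O(\sqrt{\mu\epsilon/(d^2L^2)})$ is only the binding constraint in the regime $d\gtrsim\sqrt{L/\mu}$, is a genuine subtlety that the paper glosses over; this is a mild imprecision in the corollary's big-$O$ bookkeeping rather than a flaw in your argument.
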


We provide further discussions on these results:
\begin{enumerate}
\item In the mini-batch setups (analyzed in Corollaries~\ref{coro:mini_batch} and~\ref{coro:mini_batch_2}), if we choose $|\mathcal{S}| = 1$, then as discussed at the end of Section~\ref{section:algorithm_design}, the averaged number of zeroth-order oracle calls (i.e., function queries) for each calculation of a gradient estimator is $O(1)$. Thus, the oracle complexity of ZO-L-Katyusha can be upper bounded by
$O\!\left(d\sqrt{L/\mu}\ln (1/\epsilon)\right)$.

\item In the full-batch setup with Option I (analyzed in Corollary~\ref{coro:full_batch}), since the number of zeroth-order oracle calls for each gradient estimator is $O(d)$, the oracle complexity of ZO-L-Katyusha is also upper bounded by $O\!\left(d\sqrt{L/\mu}\ln (1/\epsilon)\right)$, the same as the mini-batch setups with $|\mathcal{S}|=1$.
\end{enumerate}
In sum, regardless of whether we choose the mini-batch setup or the full-batch setup, the algorithm enjoys the same oracle complexity bound. Since first-order Nesterov's accelerated gradient descent has an oracle complexity upper bounded by $O\left(\sqrt{L/\mu}\ln (1/\epsilon)\right)$ for smooth and strongly convex optimization, we see that our algorithm's complexity bound accords with that of first-order accelerated methods apart from an $O(d)$ factor. In other words, our algorithm provides a promising approach towards bridging the gap mentioned and discussed in Section~\ref{subsection:gaps}.

\subsection{Proof Outline}
\label{subsection:proof_outline}

Here we provide the outline of the proof of Theorem~\ref{theorem:main}. Due to space limitations, we postpone the proofs of some technical lemmas to the Appendix. Throughout the proof, we let $\mathcal{F}_k$ denote the filtration generated by the family of random vectors $\{z^t,w^t,y^t:\,t\leq k\}$.

We first present some lemmas that characterize the bias and mean square error of the adjusted gradient estimator with respect to the true gradient.

\begin{lemma}
\label{lemma:Option_I_bias_var}
Suppose Option I is adopted as the sampling strategy for $\mathcal{S}_k$. Then
\begin{equation}
\|\mathbb{E}\Econd{g^k}{\mathcal{F}_k}-\nabla f(x^k)\|^2\leq \frac{1}{4}L^2\beta^2d,
\end{equation}
and
\begin{align}
\mathbb{E}\Econd{\|g^k-\nabla f(x^{k})\|^{2}}{\mathcal{F}_k}
\leq{} &\frac{4d(d-|\mathcal{S}|)L}{(d-1)|\mathcal{S}|}\cdot(f({w^k})-f(x^{k})-\langle\nabla f(x^k),w^k-x^{k}\rangle)
+ 2L^2 \beta^2 d^2.\label{eq:variance1}
\end{align}
\end{lemma}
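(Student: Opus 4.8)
The plan is to first rewrite the adjusted estimator $g^k$ in a form that exposes it as a coordinate-wise, variance-reduced (SVRG-type) estimator. Under Option I every $u\in\mathcal{S}_k$ is a coordinate vector $e_i$, so writing $D_i(x)=(f(x+\beta e_i)-f(x))/\beta$ for the $i$-th forward finite difference, we have $\hat\nabla f(x)=\sum_{i=1}^d D_i(x)e_i$ and, since $\langle\hat\nabla f(w^k),e_i\rangle=D_i(w^k)$,
\[
g^k=\frac{d}{|\mathcal{S}_k|}\sum_{i\in\mathcal{S}_k}\bigl(D_i(x^k)-D_i(w^k)\bigr)e_i+\hat\nabla f(w^k).
\]
Because $\mathcal{S}_k$ is a uniform size-$|\mathcal{S}|$ subset drawn without replacement, each index lies in $\mathcal{S}_k$ with probability $|\mathcal{S}|/d$, so the $d/|\mathcal{S}|$ rescaling makes the first term an unbiased estimate of $\sum_i(D_i(x^k)-D_i(w^k))e_i=\hat\nabla f(x^k)-\hat\nabla f(w^k)$. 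Taking conditional expectation then yields the clean identity $\mathbb{E}[g^k\mid\mathcal{F}_k]=\hat\nabla f(x^k)$, i.e. the full $(d+1)$-point finite-difference gradient at $x^k$. This reduces both claims to controlling (i) the deterministic bias $\hat\nabla f(x^k)-\nabla f(x^k)$ and (ii) the fluctuation of $g^k$ about $\hat\nabla f(x^k)$.

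For the bias bound I would work coordinate by coordinate: $L$-smoothness gives the one-dimensional Taylor estimate $|f(x+\beta e_i)-f(x)-\beta\,\partial_i f(x)|\le\tfrac{L}{2}\beta^2$, hence $|D_i(x)-\partial_i f(x)|\le\tfrac{L\beta}{2}$. Summing the squared errors over the $d$ coordinates gives $\|\hat\nabla f(x)-\nabla f(x)\|^2\le d\,(L\beta/2)^2=\tfrac14 L^2\beta^2 d$, which is exactly the first claim at $x=x^k$.

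For the mean-square error I would use the bias--variance decomposition with respect to $\mathcal{F}_k$, valid precisely because $\mathbb{E}[g^k\mid\mathcal{F}_k]=\hat\nabla f(x^k)$:
\[
\mathbb{E}\bigl[\|g^k-\nabla f(x^k)\|^2\mid\mathcal{F}_k\bigr]
=\|\hat\nabla f(x^k)-\nabla f(x^k)\|^2
+\mathbb{E}\bigl[\|g^k-\hat\nabla f(x^k)\|^2\mid\mathcal{F}_k\bigr].
\]
The first term is already bounded by $\tfrac14 L^2\beta^2 d$. For the variance term I would exploit that the selected directions $e_i$ are orthogonal, so the expected squared norm decouples across coordinates and all cross-covariances vanish; the finite-population (without-replacement) variance formula then gives $\mathbb{E}[\|g^k-\hat\nabla f(x^k)\|^2\mid\mathcal{F}_k]=\tfrac{d-|\mathcal{S}|}{|\mathcal{S}|}\sum_{i=1}^d(D_i(x^k)-D_i(w^k))^2=\tfrac{d-|\mathcal{S}|}{|\mathcal{S}|}\|\hat\nabla f(x^k)-\hat\nabla f(w^k)\|^2$, which is dominated by the stated coefficient $\tfrac{d(d-|\mathcal{S}|)}{(d-1)|\mathcal{S}|}$. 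I would then peel off the finite-difference bias at $x^k$ and $w^k$ by the triangle and Young inequalities, reducing $\|\hat\nabla f(x^k)-\hat\nabla f(w^k)\|^2$ to $2\|\nabla f(x^k)-\nabla f(w^k)\|^2$ plus an $O(dL^2\beta^2)$ remainder, and finally invoke co-coercivity for $L$-smooth convex $f$,
\[
\|\nabla f(w^k)-\nabla f(x^k)\|^2\le 2L\bigl(f(w^k)-f(x^k)-\langle\nabla f(x^k),w^k-x^k\rangle\bigr),
\]
to turn the gradient-difference term into the function-gap term on the right-hand side of~\eqref{eq:variance1}; collecting the variance remainder and the bias term into a single $2L^2\beta^2 d^2$ bound completes the estimate.

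I expect the main obstacle to be the variance computation: getting the finite-population correction right for sampling without replacement while keeping track of the fact that the coordinate structure eliminates the cross-covariances in the expected squared norm, and then cleanly isolating the genuine variance-reduction contribution (the true gradient difference $\nabla f(x^k)-\nabla f(w^k)$, which vanishes as both iterates approach $x^\ast$) from the irreducible finite-difference perturbation of order $\beta$. The bias estimate and the co-coercivity step are standard; the delicate part is the bookkeeping that funnels all the smoothing-radius errors into the single $2L^2\beta^2 d^2$ term and verifies that the leading constant $\tfrac{4d(d-|\mathcal{S}|)L}{(d-1)|\mathcal{S}|}$ indeed dominates what the computation produces.
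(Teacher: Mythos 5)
Your proposal is correct, and every step checks out quantitatively: the rewriting $g^k=\frac{d}{|\mathcal{S}_k|}\sum_{i\in\mathcal{S}_k}\bigl(D_i(x^k)-D_i(w^k)\bigr)e_i+\hat\nabla f(w^k)$, the identity $\mathbb{E}\Econd{g^k}{\mathcal{F}_k}=\hat\nabla f(x^k)$, the exact variance $\frac{d-|\mathcal{S}|}{|\mathcal{S}|}\|\hat\nabla f(x^k)-\hat\nabla f(w^k)\|^2$, and the final bookkeeping (your gap coefficient $\frac{4L(d-|\mathcal{S}|)}{|\mathcal{S}|}$ is dominated by the stated $\frac{4dL(d-|\mathcal{S}|)}{(d-1)|\mathcal{S}|}$, and your smoothing remainder $\frac{1}{4}dL^2\beta^2+\frac{2d(d-|\mathcal{S}|)}{|\mathcal{S}|}L^2\beta^2$ indeed fits inside $2L^2\beta^2d^2$). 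Your route, however, is genuinely different from the paper's. The paper never invokes the conditional bias--variance identity; it splits $g^k-\nabla f(x^k)$ inside the norm into three pieces and applies Young's inequality ($4\|a\|^2+4\|b\|^2+2\|c\|^2$): the finite-difference noise at $x^k$, the random operator $\bigl(\frac{d}{|\mathcal{S}_k|}\sum_{u\in\mathcal{S}_k}uu^T-I_d\bigr)$ applied to the finite-difference bias at $w^k$, and the centered true-gradient term; this last term is handled by a finite-population variance computation for the centered increments $b_i=\nabla_{e_i}f(x^k)-\nabla_{e_i}f(w^k)-(\nabla f(x^k)-\nabla f(w^k))$, which requires $\sum_i b_i=0$ and the cross-moments $\mathbb{E}\Econd{\iota_i\iota_j}{\mathcal{F}_k}=\frac{|\mathcal{S}|(|\mathcal{S}|-1)}{d(d-1)}$ --- that is where the $(d-1)$ in the lemma's constant originates. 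Your decomposition is an equality (conditional Pythagoras around $\hat\nabla f(x^k)$), and because your population elements $(D_i(x^k)-D_i(w^k))e_i$ are mutually orthogonal, the cross terms vanish deterministically, so only inclusion probabilities $\mathbb{E}\Econd{\iota_i^2}{\mathcal{F}_k}=|\mathcal{S}|/d$ are needed; this buys sharper intermediate constants and noticeably less bookkeeping. What the paper's looser three-term template buys in exchange is uniformity: essentially the same split is reused verbatim to prove Lemma 2 (Option II), where the sampled directions are drawn from the unit sphere, are not orthogonal, and $\mathbb{E}\Econd{g^k}{\mathcal{F}_k}$ is no longer exactly $\hat\nabla f(x^k)$, so your Pythagoras-plus-orthogonality shortcut would not carry over to that case.
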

\begin{lemma}
\label{lemma:Option_II_bias_var}
Suppose Option II is adopted as the sampling strategy for $\mathcal{S}_k$. Then
\begin{equation}
\|\mathbb{E}\Econd{g^k}{\mathcal{F}_k}-\nabla f(x^k)\|^2\leq L^2\beta^2,
\end{equation}
and
\begin{align}\label{eq:variance2}
\mathbb{E}\Econd{\|g^k-\nabla f(x^{k})\|^{2}}{\mathcal{F}_k}
\leq{} & \frac{4dL}{|\mathcal{S}|}\cdot(f({w^k})-f(x^{k})-\langle\nabla f(x^k),w^k-x^{k}\rangle)
+ 2L^2 \beta^2 d^2.
\end{align}
\end{lemma}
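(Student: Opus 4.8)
The plan is to condition throughout on $\mathcal{F}_k$, under which $x^k$, $w^k$, and hence the reference estimate $h^k \coloneqq \hat{\nabla}f(w^k)$ are all deterministic, so that the only randomness in $g^k$ is carried by the i.i.d.\ directions $u \in \mathcal{S}_k$ drawn from $\mathcal{U}(\mathbb{S}_d)$. I would first rewrite $g^k = \frac{1}{|\mathcal{S}|}\sum_{u\in\mathcal{S}_k}\xi(u) + h^k$, where $\xi(u) \coloneqq d\big(\frac{f(x^k+\beta u)-f(x^k)}{\beta} - \langle h^k, u\rangle\big)u$ is an i.i.d.\ summand, and record the two spherical moment identities $\mathbb{E}[u]=0$ and $\mathbb{E}[uu^\top]=\frac{1}{d}I_d$, from which $\mathbb{E}[d\langle v,u\rangle u]=v$ for any fixed $v$.

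For the bias, the last identity shows the control-variate term is unbiased, $\mathbb{E}\Econd{d\langle h^k,u\rangle u}{\mathcal{F}_k}=h^k$, so it cancels the additive $h^k$ and leaves $\mathbb{E}\Econd{g^k}{\mathcal{F}_k}=\mathbb{E}_u\big[d\tfrac{f(x^k+\beta u)-f(x^k)}{\beta}u\big]$, i.e.\ the expectation of the plain spherical two-point estimator. I would then invoke the standard ball-smoothing identity, that this equals $\nabla f_\beta(x^k)$ for the smoothed function $f_\beta(x)\coloneqq\mathbb{E}_{v\sim\mathcal{U}(\mathbb{B}_d)}[f(x+\beta v)]$, and bound $\|\nabla f_\beta(x^k)-\nabla f(x^k)\| = \|\mathbb{E}_v[\nabla f(x^k+\beta v)-\nabla f(x^k)]\| \le L\beta\,\mathbb{E}\|v\| \le L\beta$ using $L$-smoothness and $\|v\|\le 1$, which gives the first inequality.

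For the second moment, I would split via the bias--variance decomposition $\mathbb{E}\Econd{\|g^k-\nabla f(x^k)\|^2}{\mathcal{F}_k} = \mathrm{Var}(g^k\mid\mathcal{F}_k) + \|\mathbb{E}\Econd{g^k}{\mathcal{F}_k}-\nabla f(x^k)\|^2$, where the last term is already $\le L^2\beta^2$. By independence, $\mathrm{Var}(g^k\mid\mathcal{F}_k)=\frac{1}{|\mathcal{S}|}\mathrm{Var}(\xi)\le\frac{1}{|\mathcal{S}|}\mathbb{E}\|\xi\|^2$. To bound $\mathbb{E}\|\xi\|^2$ I would Taylor-expand the finite differences: $L$-smoothness gives $\frac{f(x^k+\beta u)-f(x^k)}{\beta}=\langle\nabla f(x^k),u\rangle+r(u)$ with $|r(u)|\le\frac{L\beta}{2}$, and likewise $h^k=\nabla f(w^k)+\rho$ with $\|\rho\|\le\frac{\sqrt{d}L\beta}{2}$, so that (using $\|u\|=1$) $\|\xi(u)\|^2=d^2\big(\langle a,u\rangle+r(u)-\langle\rho,u\rangle\big)^2$ with $a\coloneqq\nabla f(x^k)-\nabla f(w^k)$. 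Applying $(X+Y)^2\le 2X^2+2Y^2$, the moment identity $\mathbb{E}\big[\langle a,u\rangle^2\big]=\frac{1}{d}\|a\|^2$, and the error bounds on $r,\rho$ yields $\mathbb{E}\|\xi\|^2\le 2d\|a\|^2+2d^2L^2\beta^2$. Finally the cocoercivity of the $L$-smooth convex $f$ converts $\|a\|^2=\|\nabla f(x^k)-\nabla f(w^k)\|^2\le 2L\big(f(w^k)-f(x^k)-\langle\nabla f(x^k),w^k-x^k\rangle\big)$, and assembling the pieces gives the claimed bound.

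The main obstacle I anticipate is the bookkeeping of the finite-difference/smoothing errors so that everything fits inside the clean constants $\frac{4dL}{|\mathcal{S}|}$ and $2L^2\beta^2 d^2$: the leading term produced by the ideal (error-free) estimator is only $\frac{2dL}{|\mathcal{S}|}$ times the Bregman quantity, so the factor-of-two gap in $\frac{4dL}{|\mathcal{S}|}$ is precisely the buffer that must absorb the cross terms from $r(u)$ and $\rho$ as well as the leftover $L^2\beta^2$ from the bias; care is needed in choosing the Young/Cauchy--Schwarz splits (and, if needed, retaining the subtracted $\|\mathbb{E}\xi\|^2$) so that none of these error contributions inflate the $\beta$-term beyond $2L^2\beta^2 d^2$. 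A secondary point requiring care is the appeal to the ball-smoothing identity and the spherical second-moment formula, which here replace the combinatorial sampling-without-replacement computation used for Option~I.
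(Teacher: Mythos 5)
Your proposal is sound in structure and takes a genuinely different route from the paper. The paper never forms a per-direction scalar error nor invokes a bias--variance decomposition; instead it splits the vector $g^k-\nabla f(x^k)$ into three pieces --- the finite-difference error $\hat\nabla_{\mathcal{S}_k}f(x^k)-\nabla_{\mathcal{S}_k}f(x^k)$, the control-variate fluctuation $\bigl(\tfrac{d}{|\mathcal{S}_k|}\sum_{u}uu^T-I_d\bigr)\bigl(\hat\nabla f(w^k)-\nabla f(w^k)\bigr)$, and the ``ideal'' deviation $\nabla_{\mathcal{S}_k}f(x^k)-\nabla_{\mathcal{S}_k}f(w^k)+\nabla f(w^k)-\nabla f(x^k)$ --- bounds them via $\|a+b+c\|^2\le 4\|a\|^2+4\|b\|^2+2\|c\|^2$, and computes the matrix second moment $\mathbb{E}\bigl[(\tfrac{d}{|\mathcal{S}_k|}\sum_u uu^T-I_d)^2\bigr]=\tfrac{d-1}{|\mathcal{S}_k|}I_d$ for the middle piece. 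Your handling of the leading term (i.i.d.\ reduction to a single direction, $\mathbb{E}[uu^T]=\tfrac1d I_d$, cocoercivity) coincides with the paper's handling of its third piece, and your bias argument matches the paper's except that you prove the ball-smoothing bound inline where the paper cites it (its Lemma 6). Your route buys economy --- one scalar Taylor error per direction, no matrix computation; the paper's route buys clean constant bookkeeping --- its two error pieces contribute at most $d^2L^2\beta^2$ each, landing exactly on $2d^2L^2\beta^2$.

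That bookkeeping is precisely where your sketch, taken literally, does not close, as you anticipated. Two points. First, within $\mathbb{E}\|\xi\|^2$ you must bound the $\rho$-contribution by $\mathbb{E}\langle\rho,u\rangle^2=\|\rho\|^2/d\le L^2\beta^2/4$ (the moment identity applied to $\rho$); pointwise Cauchy--Schwarz $\langle\rho,u\rangle^2\le\|\rho\|^2$ would inflate this term to order $d^3L^2\beta^2$. Second, even with that, the symmetric split gives $\mathbb{E}\|\xi\|^2\le 2d\|a\|^2+2d^2L^2\beta^2$, so your final bound is $\tfrac{4dL}{|\mathcal{S}|}(\cdot)+\tfrac{2d^2}{|\mathcal{S}|}L^2\beta^2+L^2\beta^2$; when $|\mathcal{S}|=1$ the $\beta$-coefficient is $2d^2+1>2d^2$, so the lemma as stated is missed. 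A fix stays inside your framework: bound the variance by a centered second moment, $\mathrm{Var}(\xi)\le\mathbb{E}\|\xi-a\|^2$, use $\mathbb{E}\|d\langle a,u\rangle u-a\|^2=(d-1)\|a\|^2$, and split with Young parameter $\epsilon=\tfrac{d+1}{d-1}$ rather than $1$. This keeps the Bregman coefficient at $(1+\epsilon)(d-1)=2d$, hence $\tfrac{4dL}{|\mathcal{S}|}$ after cocoercivity, while shrinking the error coefficient to $(1+1/\epsilon)d^2=\tfrac{2d^3}{d+1}$, and $\tfrac{2d^3}{(d+1)|\mathcal{S}|}+1\le 2d^2$ holds for all $d\ge1$, $|\mathcal{S}|\ge1$. (Alternatively, weakening the lemma's constant to $3L^2\beta^2d^2$ would only change absolute constants downstream.) With that adjustment your proof is complete.
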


The next lemma provides a necessary technical result for establishing convergence.
\begin{lemma}
We have
    \begin{align}\label{eq:lemma3}
		&\quad\frac{1}{\theta}\big(f(y^{k+1})-f(x^k)\big)- \frac{\eta }{2(M-L\eta\theta)}\|g^k - \nabla f(x^k)\|^2\nonumber\\
  &\leq\frac{M}{2\eta}\|z^{k+1}-z^k\|^2+\langle g^k,z^{k+1}-z^k\rangle.
    \end{align}
\end{lemma}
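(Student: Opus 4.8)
The plan is to derive this inequality entirely from the $L$-smoothness of $f$ together with the algorithm's update rule, converting the true gradient into the estimator $g^k$ plus an estimation error and absorbing the resulting cross term via a weighted Young's inequality. The key structural fact I would use is that the update $y^{k+1} = x^k + \theta(z^{k+1}-z^k)$ gives exactly $y^{k+1}-x^k = \theta(z^{k+1}-z^k)$, so that a descent step evaluated at $y^{k+1}$ can be rewritten in terms of the displacement $z^{k+1}-z^k$ that appears on the right-hand side.

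First I would invoke the standard descent lemma for $L$-smooth functions,
\[
f(y^{k+1}) \leq f(x^k) + \langle \nabla f(x^k),\, y^{k+1}-x^k\rangle + \frac{L}{2}\|y^{k+1}-x^k\|^2,
\]
substitute $y^{k+1}-x^k = \theta(z^{k+1}-z^k)$, and divide through by $\theta$ to obtain
\[
\frac{1}{\theta}\big(f(y^{k+1})-f(x^k)\big) \leq \langle \nabla f(x^k),\, z^{k+1}-z^k\rangle + \frac{L\theta}{2}\|z^{k+1}-z^k\|^2.
\]
Next I would split the true gradient as $\nabla f(x^k) = g^k - (g^k-\nabla f(x^k))$, so the inner product becomes $\langle g^k, z^{k+1}-z^k\rangle - \langle g^k-\nabla f(x^k),\, z^{k+1}-z^k\rangle$. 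The first term is already the desired $\langle g^k, z^{k+1}-z^k\rangle$; the error cross term is what must be controlled.

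To bound the cross term I would apply Young's inequality with a free weight $\lambda>0$,
\[
-\langle g^k-\nabla f(x^k),\, z^{k+1}-z^k\rangle \leq \frac{1}{2\lambda}\|g^k-\nabla f(x^k)\|^2 + \frac{\lambda}{2}\|z^{k+1}-z^k\|^2,
\]
and then choose $\lambda$ so that the two quadratic terms in $\|z^{k+1}-z^k\|^2$ collapse to the target coefficient $M/(2\eta)$. Setting $\tfrac{L\theta}{2} + \tfrac{\lambda}{2} = \tfrac{M}{2\eta}$ forces $\lambda = M/\eta - L\theta$, and correspondingly the coefficient in front of $\|g^k-\nabla f(x^k)\|^2$ becomes $\tfrac{1}{2\lambda} = \tfrac{\eta}{2(M-L\eta\theta)}$, which is precisely the factor in the statement. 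Collecting terms then yields the claim.

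There is no genuine obstacle here; the only point requiring care is the consistency check that $\lambda>0$, i.e.\ that $M - L\eta\theta>0$, which is exactly the positivity of the denominator appearing in the statement. This is guaranteed under the parameter regime of the algorithm, since $\eta=1/(3\theta)$ gives $L\eta\theta = L/3$ while the prescribed $M=(A+1)L/3\geq 2L/3$ ensures $M-L\eta\theta>0$; I would simply remark that the choice $\lambda=M/\eta-L\theta$ is admissible whenever this denominator is positive, so that the weighted Young step is valid.
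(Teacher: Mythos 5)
Your proof is correct and is essentially the same argument as the paper's: both rest on the identity $y^{k+1}-x^k=\theta(z^{k+1}-z^k)$, the descent lemma for $L$-smooth functions, the split of $\nabla f(x^k)$ into $g^k$ minus the estimation error, and a weighted Young's inequality whose weight is chosen to make the quadratic terms collapse to $\tfrac{M}{2\eta}\|z^{k+1}-z^k\|^2$, yielding the coefficient $\tfrac{\eta}{2(M-L\eta\theta)}$. The only differences are cosmetic — the paper works from the right-hand side in the variable $y^{k+1}-x^k$ while you build up from the descent lemma in the variable $z^{k+1}-z^k$ — and your explicit check that $M-L\eta\theta>0$ under the prescribed parameters is a small but welcome addition the paper leaves implicit.
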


Now, for notational simplicity, we introduce the following quantities:
\[
    \begin{aligned}
	\mathcal{Z}^{k} & =\frac{M+\eta\mu}{2\eta}\|z^{k}-x^{*}\|^{2}, &\ \ &
 \mathcal{Y}^{k}=\frac{1}{\theta}(F(y^{k})-F^{*}),\\ 
 \mathcal{W}^{k} & =\frac{1+\theta}{2p\theta}(F(w^{k})-F^{*}),
    \end{aligned}
\]
so that the stochastic Lyapunov function $\Phi^k$ can be written as $\Phi^k = \mathcal{Z}^k + \mathcal{Y}^k + \mathcal{W}^k$.
Since $w^{k+1}$ equals $w^k$ with probability $1-p$ and equals $y^k$ with probability $p$, it's not hard to check that the following identity holds:
\begin{equation}
\label{eq:conditinoal_expectation_w_k+1}
	\mathbb{E}\Econd{\mathcal{W}^{k+1}}{\mathcal{F}_k}=(1-p)\mathcal{W}^{k}+\frac{1+\theta}{2}\mathcal{Y}^{k},\qquad \forall k\geq0.
\end{equation}
Using this set of notations, we proceed to establish the following lemma, which is another necessary technical result for our proof.
\begin{lemma}
We have
    \begin{align}\label{eq:lemma4}
	&\quad \langle g^{k},x^{*}-z^{k+1}\rangle+\frac{\mu_{f}}{2}\|x^{k}-x^{*}\|^{2}+\frac{M\mathcal{Z}^{k}}{M+\eta\mu}\nonumber\\
        &\geq\frac{M}{2\eta}\|z^{k}-z^{k+1}\|^{2}+\mathcal{Z}^{k+1}+\psi(z^{k+1})-\psi(x^{*}).
    \end{align}
\end{lemma}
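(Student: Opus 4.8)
The plan is to recognize $z^{k+1}$ as the exact minimizer of a strongly convex auxiliary function and then apply the standard quadratic lower bound for strongly convex functions. First I would unfold the proximal update. Using the definition of $\operatorname{prox}$ together with the facts $\sigma = \mu_f/M$ and $(1+\eta\sigma)M = M+\eta\mu_f$, the claim is that $z^{k+1}$ is the unique minimizer of
\[
Q(y) \coloneqq \langle g^k, y\rangle + \psi(y) + \frac{\mu_f}{2}\|y - x^k\|^2 + \frac{M}{2\eta}\|y - z^k\|^2 .
\]
To verify this I would write the first-order optimality (subgradient) condition of the prox: there exists $s\in\partial\psi(z^{k+1})$ with $\eta\mu_f x^k + M z^k - \eta g^k = (M+\eta\mu_f)z^{k+1} + \eta s$, and check that this is precisely $0\in\partial Q(z^{k+1})$ after multiplying the stationarity condition of $Q$ by $\eta$. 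Equivalently, one may multiply the prox objective by the positive constant $(1+\eta\sigma)M/\eta$, expand the square, and confirm that the $y$-dependent terms coincide with those of $Q$ up to an additive constant.

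Next I would record the modulus of strong convexity of $Q$. The two quadratic terms contribute $\mu_f + M/\eta$, and $\psi$ is $\mu_\psi$-strongly convex by the second part of Assumption~1, so $Q$ is $(\mu_f + \mu_\psi + M/\eta) = (\mu + M/\eta)$-strongly convex. Since $z^{k+1}$ minimizes $Q$, evaluating the strong-convexity inequality at the minimizer (with the zero subgradient) gives
\[
Q(x^*) \geq Q(z^{k+1}) + \frac{\mu + M/\eta}{2}\|x^* - z^{k+1}\|^2 .
\]

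Finally I would expand $Q(x^*) - Q(z^{k+1})$ term by term and rearrange. The linear and nonsmooth terms produce $\langle g^k, x^*-z^{k+1}\rangle$ and $\psi(x^*)-\psi(z^{k+1})$, while the quadratic terms produce $\tfrac{\mu_f}{2}\big(\|x^*-x^k\|^2 - \|z^{k+1}-x^k\|^2\big)$ and $\tfrac{M}{2\eta}\big(\|x^*-z^k\|^2 - \|z^{k+1}-z^k\|^2\big)$. Invoking the identities $\tfrac{M}{2\eta}\|z^k-x^*\|^2 = \tfrac{M\mathcal{Z}^k}{M+\eta\mu}$ and $\tfrac{\mu+M/\eta}{2}\|z^{k+1}-x^*\|^2 = \mathcal{Z}^{k+1}$ (both immediate from the definition of $\mathcal{Z}^k$), every term lines up with the claimed inequality except for a single leftover quantity $\tfrac{\mu_f}{2}\|z^{k+1}-x^k\|^2$ appearing on the larger side; dropping this nonnegative term yields exactly \eqref{eq:lemma4}.

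The only genuinely delicate step is the first one: correctly matching the proximal update with the variational problem defining $Q$, in particular tracking the normalization by $1+\eta\sigma$ and the identity $\sigma M = \mu_f$ so that the extrapolation toward $x^k$ is captured by the $\tfrac{\mu_f}{2}\|y-x^k\|^2$ penalty. Once $Q$ is correctly identified, the remainder is routine expansion of squared norms and the discarding of a single nonnegative term, so no further obstacle is expected.
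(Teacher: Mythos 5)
Your proof is correct, and it takes a recognizably different route from the paper's. The paper works at the level of the optimality condition itself: it extracts a subgradient $\xi^k\in\partial\psi(z^{k+1})$ from the prox update, solves for $g^k=\frac{M}{\eta}(z^k-z^{k+1})+\mu_f(x^k-z^{k+1})-\xi^k$, expands $\langle g^k,z^{k+1}-x^*\rangle$ via three-point (polarization) identities, and invokes the $\mu_\psi$-strong-convexity subgradient inequality for $\psi$ at $z^{k+1}$. You instead recognize $z^{k+1}$ as the exact minimizer of the auxiliary objective
\[
Q(y)=\langle g^k,y\rangle+\psi(y)+\tfrac{\mu_f}{2}\|y-x^k\|^2+\tfrac{M}{2\eta}\|y-z^k\|^2,
\]
verify this via the same prox optimality condition (your identity $\eta\mu_f x^k+Mz^k-\eta g^k=(M+\eta\mu_f)z^{k+1}+\eta s$ matches the paper's, using $M\sigma=\mu_f$), and then apply the quadratic-growth bound $Q(x^*)\geq Q(z^{k+1})+\tfrac{\mu+M/\eta}{2}\|x^*-z^{k+1}\|^2$ at the minimizer of a $(\mu+M/\eta)$-strongly convex function. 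The two arguments are computationally equivalent — both ultimately discard the same nonnegative term $\tfrac{\mu_f}{2}\|z^{k+1}-x^k\|^2$, and your quadratic-growth lemma is exactly what the paper's polarization identities plus the $\psi$-subgradient inequality prove by hand for this particular $Q$. What your packaging buys is modularity and transparency: the strong-convexity moduli $\mu_f$, $\mu_\psi$, $M/\eta$ are accounted for in one place, the constants $\tfrac{M\mathcal{Z}^k}{M+\eta\mu}$ and $\mathcal{Z}^{k+1}$ appear automatically from the definition of $\mathcal{Z}^k$, and it is clear why the leftover term may be dropped. What the paper's version buys is that it never needs the sum rule for subdifferentials or the general growth lemma — everything is elementary algebra from the single subgradient $\xi^k$ — which keeps the appendix self-contained.
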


Now we are ready to prove our main theorem. First of all, we use the $\mu_f$-strong convexity of $f$ to obtain
\begin{align*}
f(x^{*})\geq{} &
f(x^{k})+\langle\nabla f(x^{k}),x^{*}-x^{k}\rangle+\frac{\mu_{f}}2\|x^{k}-x^{*}\|^{2}\\
	={} & 
 f(x^{k})+\frac{\mu_{f}}{2}\|x^{k}-x^{*}\|^{2}
 +\langle\nabla f(x^{k}),x^{*}-z^{k}\rangle
 +\langle\nabla f(x^{k}),z^{k}-x^{k}\rangle\\
	={} &
 f(x^{k})+\frac{\mu_{f}}{2}\|x^{k}-x^{*}\|^{2}+\langle\nabla f(x^{k}),x^{*}-z^{k}\rangle\\
        &\!\!+\frac{1}{2\theta}\langle\nabla\! f(x^{k}),x^{k} \!-\! w^{k}\rangle
        +\frac{1 \!-\! 2\theta}{2\theta}\langle\nabla\! f(x^{k}),x^{k} \!-\! y^{k}\rangle \\
\geq{} &
f(x^{k})+\frac{\mu_{f}}{2}\|x^{k}-x^{*}\|^{2}+\langle\nabla f(x^{k}),x^{*}-z^{k}\rangle\\
        &+\frac{1}{2\theta}\langle\nabla f(x^{k}),x^{k} \!-\! w^{k}\rangle
        +\frac{1\!-\!2\theta}{2\theta}(f(x^k) \!-\! f(y^k)),
\end{align*}
where in the third step we used Line 2 in Algorithm~\ref{alg:zol_katyusha}, and in the last step we used the convexity of $f$ to get $\langle\nabla f(x^{k}),x^{k}-y^{k}\rangle\geq f(x^k)-f(y^k)$. Then, we notice that
\begin{align*}
\langle\nabla f(x^{k}),x^{*}-z^{k}\rangle
={} &
-\langle \mathbb{E}\Econd{g^k}{\mathcal{F}_k}-\nabla f(x^k),x^\ast-z^k\rangle
+
\langle \mathbb{E}\Econd{g^k}{\mathcal{F}_k},x^\ast-z^{k}\rangle,
\end{align*}
and using the inequality $\langle a,b\rangle
\geq -\frac{1}{2\epsilon}\|a\|^2-\frac{\epsilon}{2}\|b\|^2$ for any $\epsilon>0$, we have
\begin{align*}
\langle\mathbb{E}\Econd{
g^k}{\mathcal{F}_k} - \nabla f(x^k),
x^\ast-z^k
\rangle
\geq{} &
-\frac{1}{\mu}
\left\|\mathbb{E}\Econd{
g^k}{\mathcal{F}_k} - \nabla f(x^k)\right\|^2
-\frac{\mu}{4}
\left\|x^\ast-z^k\right\|^2 \\
\geq{} & -\frac{\beta^2 L^2d}{\mu}
-\frac{\eta\mu}{2(M+\eta\mu)}\mathcal{Z}^k,
\end{align*}
where we used Lemmas~\ref{lemma:Option_I_bias_var} and~\ref{lemma:Option_II_bias_var} to bound the term $\left\|\mathbb{E}\Econd{
g^k}{\mathcal{F}_k} - \nabla f(x^k)\right\|^2$, and also used the definition of $\mathcal{Z}^k$.
Therefore
\begin{align*}
f(x^\ast)\geq{} &
f(x^k) + \frac{1}{2\theta}\langle\nabla f(x^k),x^k-w^k\rangle
-\frac{\eta\mu}{2(M+\eta\mu)}\mathcal{Z}^k \\
& -\frac{\beta^2 L^2 d}{\mu}
+\frac{1-2\theta}{2\theta}(f(x^k)-f(y^k))
+ \frac{\mu_f}{2}\|x^k-x^\ast\|^2 + \langle \mathbb{E}\Econd{g^k}{\mathcal{F}_k}, x^\ast-z^{k}\rangle.
\end{align*}
Next, we note that the inequality~\eqref{eq:lemma4} implies
\[
\begin{aligned}
& \frac{\mu_f}{2}\|x^k-x^\ast\|^2
+\langle \mathbb{E}\Econd{g^k}{\mathcal{F}_k}, x^\ast-z^{k}\rangle \\
={} & \mathbb{E}\Econd*{
\langle g^k, x^\ast-z^{k+1}\rangle
+\frac{\mu_f}{2}\|x^k-x^\ast\|^2
}{\mathcal{F}_k}
+ \mathbb{E}\Econd*{\langle g^k, z^{k+1}-z^k\rangle}{\mathcal{F}_k} \\
\geq{} &
\mathbb{E}\Econd*{\frac{M}{2\eta}\|z^k-z^{k+1}\|^2
+\mathcal{Z}^{k+1} + \psi(z^{k+1})-\psi(x^\ast)}{\mathcal{F}_k}
+ \mathbb{E}\Econd*{
\langle g^k,z^{k+1}-z^k\rangle
-\frac{M\mathcal{Z}^k}{M+\eta\mu} 
}{\mathcal{F}_k},
\end{aligned}
\]
and \eqref{eq:lemma3} together with Lemmas~\ref{lemma:Option_I_bias_var} and~\ref{lemma:Option_II_bias_var} leads to
\begin{align*}
& \mathbb{E}\Econd*{\frac{M}{2\eta}\|z^k-z^{k+1}\|^2
+\langle g^k,z^{k+1}-z^k\rangle}
{\mathcal{F}_k} \\
\geq{} &
\mathbb{E}\Econd*{
\frac{f(y^{k+1}) \!-\! f(x^k)}{\theta}
-\frac{\eta}{2(M\!-\!L\eta\theta)}\|g^k \!-\! \nabla f(x^k)\|^2
}{\mathcal{F}_k} \\
\geq{} &
\frac{1}{\theta}\mathbb{E}\Econd{f(y^{k+1})-f(x^k)}{\mathcal{F}_k}
-\frac{\eta}{2(M-L\eta\theta)}\cdot 2L^2\beta^2d^2 \\
& -
\frac{\eta AL}{2(M \!-\! L\eta\theta)}
(f(w^k)-f(x^k)-\langle\nabla f(x^k),w^k-x^k\rangle) \\
={} & \frac{1}{\theta}\mathbb{E}\Econd{f(y^{k+1})-f(x^k)}{\mathcal{F}_k}
- \frac{L\beta^2 d^2}{A\theta}
-\frac{1}{2\theta}
(f(w^k)-f(x^k)-\langle\nabla f(x^k),w^k-x^k\rangle),
\end{align*}
where in the last step we used $\eta=1/(3\theta)$ and $M=(A+1)L/3$. Consequently,
\begin{align*}
& \frac{\mu_f}{2}\|x^k-x^\ast\|^2
+\langle \mathbb{E}\Econd{g^k}{\mathcal{F}_k}, x^\ast-z^{k}\rangle \\
\geq{} &
\frac{1}{\theta}\mathbb{E}\Econd{f(y^{k+1})-f(x^k)}{\mathcal{F}_k}
- \frac{L\beta^2 d^2}{A\theta}
-\frac{1}{2\theta}
(f(w^k)-f(x^k)-\langle\nabla f(x^k),w^k-x^k\rangle) \\
& +
\mathbb{E}\Econd*{\mathcal{Z}^{k+1} + \psi(z^{k+1})-\psi(x^\ast)
-\frac{M\mathcal{Z}^k}{M+\eta\mu} }{\mathcal{F}_k}.
\end{align*}
Summarizing the previous results, we can show that
\begin{align*}
& \mathbb{E}\Econd*{f(x^\ast)-\psi(z^{k+1}) + \psi(x^\ast) -\mathcal{Z}^{k+1}}{\mathcal{F}_k} \\
\geq{} &
f(x^k) +
\frac{1-2\theta}{2\theta}(f(x^k)-f(y^k))
-\frac{M\mathcal{Z}^k}{M+\eta\mu}
+\frac{1}{\theta}\mathbb{E}\Econd{f(y^{k+1})-f(x^k)}{\mathcal{F}_k} \\
& -\frac{1}{2\theta}(f(w^k)-f(x^k))
-\frac{\beta^2L^2d}{\mu}
-\frac{\eta\mu}{2(M+\eta\mu)}\mathcal{Z}^k
-\frac{L\beta^2d^2}{A\theta} \\
={} &
-\frac{M+\eta\mu/2}{M+\eta\mu}\mathcal{Z}^k
-\frac{1-2\theta}{2\theta}f(y^k)
+\frac{1}{\theta}\mathbb{E}\Econd{f(y^{k+1})}{\mathcal{F}_k}
-\frac{1}{2\theta} f(w^k)
-\frac{\beta^2L^2d}{\mu} - \frac{L\beta^2 d^2}{A\theta}.
\end{align*}
Moreover, since $\psi$ is convex and 
\begin{align*}
y^{k+1} & =x^k+\theta (z^{k+1}-z^k)
=\theta z^{k+1}+\frac{1}{2} w^k+\left(\frac{1}{2}-\theta\right)y^k,
\end{align*}
by Jensen's inequality, we have
\begin{equation*}
\psi(z^{k+1})\ge\frac{1}{\theta}\psi(y^{k+1})-\frac{1}{2\theta}\psi(w^{k})-\frac{1-2\theta}{2\theta}\psi(y^{k}).
\end{equation*}
Hence, we arrive at
\begin{align*}
f(x^{*})\geq{} &
\mathbb{E}\Econd{\mathcal{Z}^{k+1}}{\mathcal{F}_k}-\frac{M+\eta\mu/2}{M+\eta\mu}\mathcal{Z}^{k}
-\frac{1-2\theta}{2\theta}F(y^{k}) \\
& +\frac{1}{\theta}\mathbb{E}\Econd{F(y^{k+1})}{\mathcal{F}_k}-\frac{F(w^{k})}{2\theta}-\psi(x^{*})-C(\beta),
\end{align*}
where we denote
\[
C(\beta) \coloneqq \beta^2 d^2 L \left(
\mfrac{L}{d\mu} + \mfrac{1}{A\theta}\right).
\]
After arranging the terms using~\eqref{eq:conditinoal_expectation_w_k+1}, we get
\begin{align*}
\mathbb{E}\Econd*{\mathcal{Z}^{k+1}+\mathcal{Y}^{k+1}+\mathcal{W}^{k+1}}{\mathcal{F}_k}
\leq{} &
\frac{M+\eta\mu/2}{M+\eta\mu}\mathcal{Z}^{k}
+\left(\frac{1}{2}-\theta\right)\mathcal{Y}^{k}+\frac{p}{1+\theta}\mathcal{W}^{k} \\
&+(1-p)\mathcal{W}^{k} +\frac{1+\theta}{2}\mathcal{Y}^{k}+C(\beta) \\
={} &
\left(1-\frac{\eta\mu/2}{M+\eta\mu}\right)\mathcal{Z}^{k}
+\left(1-\frac{\theta}{2}\right)\mathcal{Y}^{k}\\
&+\left(1-\frac{p\theta}{1+\theta}\right)\mathcal{W}^{k}+C(\beta) \\
\leq{} & (1-\Delta)(\mathcal{Z}^k + \mathcal{Y}^k + \mathcal{W}^k) + C(\beta),
\end{align*}
where we used the definition of the quantity $\Delta$ in the last step. In other words,
\[
\mathbb{E}\Econd*{\Phi^{k+1}
-\frac{1}{\Delta}C(\beta)}{\mathcal{F}_k}
\leq (1-\Delta)\left(\Phi^{k}
-\frac{1}{\Delta}C(\beta)\right).
\]
The conclusion of the theorem is now evident.

\section{Numerical experiments}
\label{section:numerical}
\subsection{Algorithmic Performance} \label{section:numerical1}
In this section, we consider the following constrained optimization problem:
\begin{equation*}
    \min_{x\in\mathcal{X}}\left\{\frac{1}{n}\sum_{i=1}^{n}\ln(1+\exp(-b_i a_i ^T x)) + \frac{\mu}{2}\|x\|^2\right\},
\end{equation*}
which takes the form of an L2 regularized logistic regression problem with constraints. Here we let $\mathcal{X}\subseteq\mathbb{R}^d$ be a closed box, and set $d=40, n=30, \mu=0.02$ for the test case. The data set $\{(a_i,b_i)\}_{i=1}^n$ is manually synthesized.
This problem can be formulated as a special case of \eqref{eq:formulation} where
\begin{equation*}
    f(x) = \frac{1}{n}\sum_{i=1}^{n}\log(1+\exp(-b_i a_i ^T x)),\quad 
\end{equation*}
is smooth and convex, and
\begin{equation*}
        \quad \psi(x) = \frac{\mu}{2}\|x\|^2 + I_{\mathcal{X}}(x)
\end{equation*}
is strongly convex. Note that here we only use this problem for the purpose of comparison of the algorithms' convergence behavior; a more practical approach would also need to exploit its finite-sum structure.

\begin{figure}[tbp]
    \centering
    \includegraphics[width=.6\linewidth]{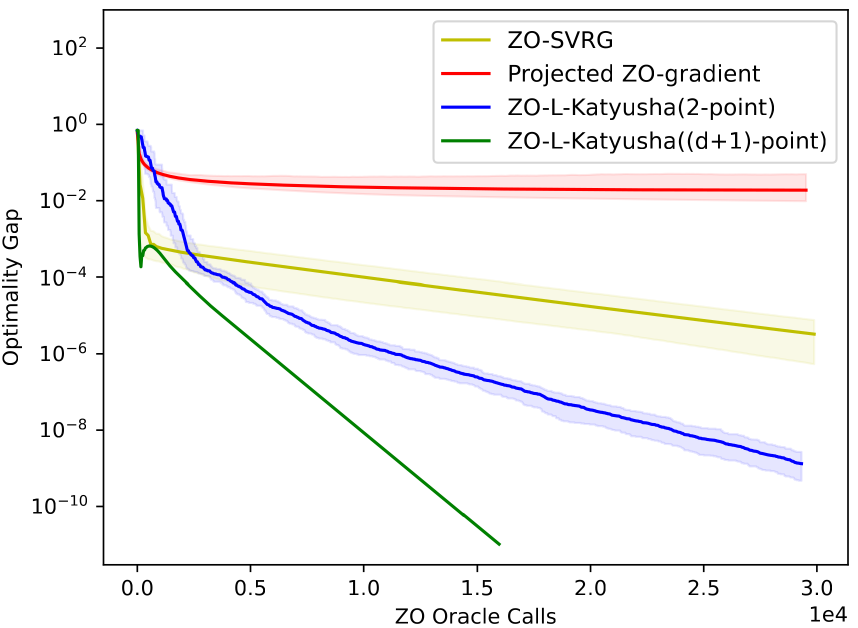}
    \caption{The performance of $2$-point and $(d+1)$-point ZO-Katyusha}
    \label{fig:1}
\end{figure}

We test the $2$-point and $(d+1)$-point version of \textit{ZO-L-Katyusha} (i.e., the mini-batch setup and the full-batch setup) against existing algorithms. The results are shown in Figure~\ref{fig:1}. Here \textit{ZO-SVRG} is adapted from \cite{ji2019improved}, with some minor adaptations to fit our specific problem formulation, and \textit{Projected ZO-gradient} is the standard stochastic projected zeroth-order gradient descent algorithm, to be found in \cite{duchi2015optimal}. Each solid curve in the Figure represents the average of 50 random trials for each algorithm, and the corresponding light shade represents the 5\% to 95\% quantile interval among these trials.

From Figure~\ref{fig:1}, some observations can be made. First, \textit{Projected ZO-gradient} performs the worst, with the curve almost flattened out in the latter half of the iterations, this may be due to the diminishing stepsize policy introduced to deal with the non-vanishing variance of the gradient estimator, which leads to a sub-optimal $O(d/\epsilon^2)$ function query complexity. While with variance reduction techniques, all the other algorithms can employ a constant stepsize. Second, our \textit{ZO-Katyusha} algorithm performs better than \textit{ZO-SVRG}, while these two algorithms use similar gradient estimators, acceleration enables the former to converge even faster.

\subsection{A Note on Accelerated Zeroth-Order Method}
\label{subsection:naive_combination}
It is already mentioned that simply plugging a 2-point ZO gradient estimator into existing first order projected accelerated methods, like the one in \cite{nesterov2018lectures}, does not result in fast convergence as one may expect. To demonstrate this, we test the performance of the following iterations:
\begin{equation}
\label{eq:naive_combination}
\begin{aligned}
    x^{k+1} &= P_\mathcal{X}\!\left(y_k - \mfrac{1}{dL}g^k\right),\\
    y^{k+1} &= x^{k+1} - \mfrac{1-\sqrt{\mu/L}}{1+\sqrt{\mu/L}}(x^{k+1}-x^k),
\end{aligned}    
\end{equation}
where $P_\mathcal{X}$ denotes projection onto $\mathcal{X}$, and $g^k$ is generated by the simple 2-point gradient estimator~\eqref{eq:2d_point_estimator}. For the numerical test, we set $f$ to be a strongly convex quadratic function, and set $\mathcal{X}$ to be i) $\mathbb{R}^d$ (the unconstrained case), and ii) a high-dimensional box (the constrained case), respectively. We run the above algorithm 50 times over the test cases.

\begin{figure}[tbp]
    \centering
    \includegraphics[width=.6\linewidth]{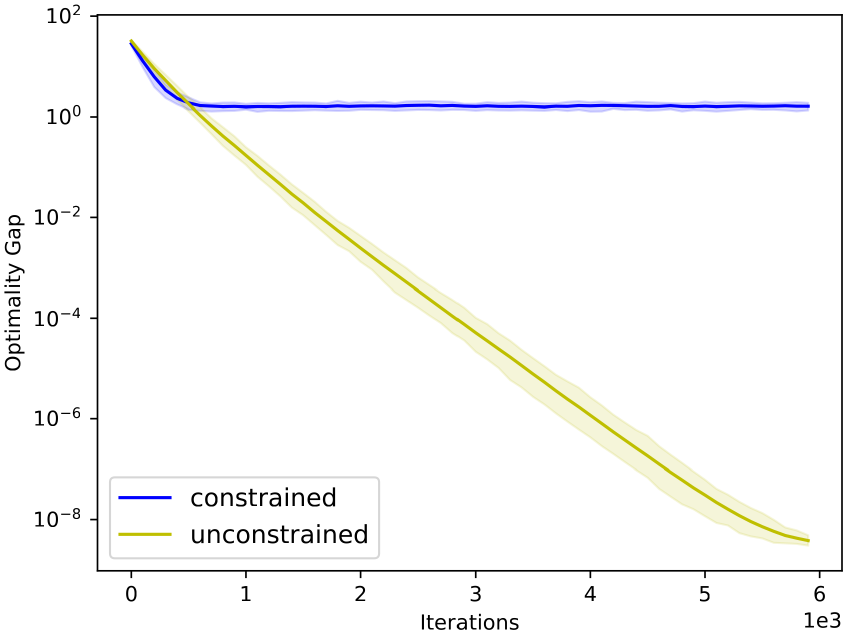}
    \caption{Performance of the algorithm~\eqref{eq:naive_combination} under constrained and unconstrained cases.}
    \label{fig:2}
\end{figure}

The results are shown in Figure~\ref{fig:2}. We see that while in the unconstrained case, this algorithm converges quite fast, in the constrained case, the algorithm does not exhibit a clear trend of convergence. This partially illustrates the gap mentioned in Section~\ref{subsection:gaps}, and suggests that some variance reduction techniques may be necessary if we want to design accelerated zeroth-order algorithms based on $2$-point gradient estimators.

\section{Conclusion}

We proposed the Zeroth-Order Loopless Katyusha (\emph{ZO-L-Katyusha}) algorithm for composite strongly convex optimization, which is able to achieve accelerated linear convergence while only requiring $O(1)$ function queries per iteration. Some future directions include: i) analysis of zeroth-order Katyusha methods for non-strongly convex problems; ii) developing accelerated zeroth-order algorithms for constrained problems using purely $O(1)$-point gradient estimators; iii) extending our algorithm to incorporate finite-sum structures.

\bibliographystyle{IEEEtran}
\bibliography{IEEEabrv,Katyusha}

\appendix
	\section{Technical Proofs}
Throughout this material, we agree on the following notation conventions:
\[
\begin{aligned}
	\hat{\nabla}_u f(x) & = d\frac{f(x+\beta u) - f(x)}{\beta} u,
 &\quad
	\nabla_u f(x) & = d\langle \nabla f(x), u \rangle u,
\end{aligned}
\]
for $u \in \mathbb{R}^d$, and
\begin{equation*}
	\nabla_{\mathcal{S}} f(x) = \frac{1}{|\mathcal{S}|} \sum_{u \in \mathcal{S}} \nabla_u f(x)
\end{equation*}
for $\mathcal{S} = \{u_1, u_2, ..., u_{|\mathcal{S}|}\}, u_i \in \mathbb{R}^d$. With this notation, we have
\[
\hat{\nabla} f(x)
=\frac{1}{d}\sum_{i=1}^d
\hat{\nabla}_{e_i} f(x),
\qquad
\nabla f(x)
=\frac{1}{d}\sum_{i=1}^d
\nabla_{e_i} f(x).
\]
Recall that the function $f$ is assumed to be convex and $L$-smooth, and $\beta>0$ represents the smoothing radius.

Note that
\[
\begin{aligned}
\mathbb{E}\Econd*{g^k}{\mathcal{F}_k}
={} & 
\mathbb{E}\Econd*{
\hat\nabla_{\mathcal{S}_k} f(x^k)
-\frac{d}{|\mathcal{S}_k|}\sum_{u\in\mathcal{S}_k}
uu^T\hat\nabla f(w^k) + \hat\nabla f(w^k),
}{\mathcal{F}_k}
\end{aligned}
\]
and since $
\mathbb{E}\Econd*{
\sum_{u\in\mathcal{S}_k} uu^T
}{\mathcal{F}_k}
=\frac{|\mathcal{S}_k|}{d}I_d$ holds for both Option I and Option II in Algorithm~\ref{alg:zol_katyusha} (the proof for Option II can be found in, e.g., Lemma 5 of \cite{ji2019improved}),
we see that
\[
\begin{aligned}
\mathbb{E}\Econd*{g^k}{\mathcal{F}_k}
&
=\mathbb{E}\Econd*{\hat{\nabla}_{\mathcal{S}_k} f(x^k)}{\mathcal{F}_k}.
\end{aligned}
\]

To proceed, we introduce the following auxiliary lemmas:
\setcounter{lemma}{4}
\setcounter{equation}{14}
\begin{lemma}
Suppose $u\in\mathbb{S}_d$. Then for any $x\in \mathbb{R}^d$, we have
	\begin{equation}\label{eq:two_point_bias}
		\|\hat{\nabla}_u f(x) - \nabla_u f(x)\| \leq \frac{ d L\beta}{2}.
	\end{equation}
\end{lemma}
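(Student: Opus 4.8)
The plan is to reduce the claimed bound directly to a standard consequence of $L$-smoothness. Substituting the definitions $\hat{\nabla}_u f(x) = d\,\frac{f(x+\beta u)-f(x)}{\beta}\,u$ and $\nabla_u f(x) = d\langle\nabla f(x),u\rangle u$, the difference factors through the common direction $u$:
\[
\hat{\nabla}_u f(x) - \nabla_u f(x) = \frac{d}{\beta}\Big(f(x+\beta u) - f(x) - \beta\langle\nabla f(x),u\rangle\Big)u.
\]
Since $u\in\mathbb{S}_d$ satisfies $\|u\|=1$, taking Euclidean norms gives
\[
\big\|\hat{\nabla}_u f(x) - \nabla_u f(x)\big\| = \frac{d}{\beta}\,\big|f(x+\beta u) - f(x) - \beta\langle\nabla f(x),u\rangle\big|,
\]
so the whole task collapses to bounding a scalar first-order Taylor remainder.

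First I would invoke the quadratic bound that follows from $L$-smoothness: for any $x,y\in\mathbb{R}^d$,
\[
\big|f(y) - f(x) - \langle\nabla f(x),y-x\rangle\big| \leq \frac{L}{2}\|y-x\|^2.
\]
This is standard, and can be derived by writing the remainder as $\int_0^1\langle\nabla f(x+t(y-x))-\nabla f(x),\,y-x\rangle\,dt$ and applying Cauchy--Schwarz together with the Lipschitz condition $\|\nabla f(z)-\nabla f(z')\|\leq L\|z-z'\|$ from Assumption~1. Setting $y = x+\beta u$, so that $y-x=\beta u$ and $\|y-x\|^2=\beta^2$ (again using $\|u\|=1$), this yields
\[
\big|f(x+\beta u) - f(x) - \beta\langle\nabla f(x),u\rangle\big| \leq \frac{L\beta^2}{2}.
\]

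Finally I would combine the two displays: substituting this remainder estimate into the norm identity above, the factor $\beta$ in the denominator cancels one factor of $\beta$ in $L\beta^2/2$, leaving exactly $\big\|\hat{\nabla}_u f(x) - \nabla_u f(x)\big\| \leq \frac{dL\beta}{2}$, as claimed. There is no genuine obstacle here; the only points requiring care are keeping track of the prefactor $d/\beta$ and using $\|u\|=1$ both to pull the direction $u$ out of the norm and to evaluate $\|\beta u\|^2$. Everything else is the textbook smoothness estimate.
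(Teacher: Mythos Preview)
Your proof is correct and essentially identical to the paper's: both reduce the norm to $\frac{d}{\beta}$ times the scalar first-order Taylor remainder and bound that remainder via the integral representation together with $L$-smoothness. The only cosmetic difference is that the paper writes out the integral $\int_0^\beta \langle\nabla f(x+\gamma u)-\nabla f(x),u\rangle\,d\gamma$ explicitly, whereas you package this step as the standard quadratic bound $|f(y)-f(x)-\langle\nabla f(x),y-x\rangle|\leq \frac{L}{2}\|y-x\|^2$.
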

\begin{proof} We have
	\begin{align*}
		\|\hat{\nabla}_u f(x) - \nabla_u f(x)\| &= d\left| \frac{f(x+\beta u) - f(x)}{\beta} - \langle \nabla f(x), u \rangle \right|\\
		&=\frac{d}{\beta}\left|\int_0 ^{\beta} \left(\langle\nabla f(x+\gamma u), u\rangle - \langle\nabla f(x), u\rangle\right) d\gamma \right|\\
		&\leq\frac{d}{\beta}\int_0 ^{\beta} \left|\langle\nabla f(x+\gamma u), u\rangle - \langle\nabla f(x), u\rangle\right| d\gamma \\
		&\leq \frac{d}{\beta}\int_0 ^{\beta}  \|\nabla f(x+\gamma u)-\nabla f(x)\|\|u\|d\gamma\\
		&\leq \frac{d}{\beta} \int_{0}^{\beta} L\gamma \|u\|^2 d\gamma = \frac{dL\beta}{2},
	\end{align*}
 where in the second step we used the fundamental theorem of calculus, and in the fifth step we used the $L$-smoothness of $f$.
\end{proof}

\begin{lemma}[{\cite[Lemma 14]{JMLR:v21:19-198}}]
\label{lemma:bias_2point_uniform_sphere}
Suppose $u\sim \mathcal{U}(\mathbb{S}_d)$. Then for any $x\in\mathbb{R}^d$, we have
\[
\left\|\mathbb{E}\!\left[
\hat\nabla_u f(x)
\right] - \nabla f(x)\right\|
\leq \beta L.
\]
\end{lemma}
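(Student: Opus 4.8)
The plan is to identify $\mathbb{E}_{u\sim\mathcal{U}(\mathbb{S}_d)}[\hat\nabla_u f(x)]$ with the gradient of a \emph{ball-smoothed} version of $f$, which is what exposes the cancellation responsible for the bias scaling like $\beta L$ rather than $d\beta L$. First I would introduce the smoothed function $f_\beta(x) \coloneqq \mathbb{E}_{w\sim\mathcal{U}(\mathbb{B}_d)}[f(x+\beta w)]$, where $\mathbb{B}_d$ is the closed unit ball in $\mathbb{R}^d$, and establish the Stokes-type (divergence-theorem) smoothing identity
\[
\nabla f_\beta(x)
= \frac{d}{\beta}\,\mathbb{E}_{u\sim\mathcal{U}(\mathbb{S}_d)}\bigl[f(x+\beta u)\,u\bigr].
\]
Since $\mathbb{E}_{u\sim\mathcal{U}(\mathbb{S}_d)}[u]=0$, inserting the constant $-f(x)$ inside the expectation leaves the right-hand side unchanged, so by the definition $\hat\nabla_u f(x) = d\,\beta^{-1}(f(x+\beta u)-f(x))\,u$ it equals $\mathbb{E}_u[\hat\nabla_u f(x)]$. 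This yields the clean characterization $\mathbb{E}_u[\hat\nabla_u f(x)] = \nabla f_\beta(x)$, reducing the lemma to a bound on $\|\nabla f_\beta(x)-\nabla f(x)\|$.

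With this identity in hand, the rest is routine. I would differentiate under the integral sign, which is justified by $L$-smoothness together with dominated convergence, to obtain $\nabla f_\beta(x) = \mathbb{E}_{w\sim\mathcal{U}(\mathbb{B}_d)}[\nabla f(x+\beta w)]$. Then, applying Jensen's inequality for the norm followed by $L$-smoothness,
\[
\|\nabla f_\beta(x)-\nabla f(x)\|
\le \mathbb{E}_w\bigl[\|\nabla f(x+\beta w)-\nabla f(x)\|\bigr]
\le \mathbb{E}_w\bigl[L\beta\|w\|\bigr]
\le L\beta,
\]
where the final inequality uses $\|w\|\le 1$ on the unit ball. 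Combined with $\mathbb{E}_u[\hat\nabla_u f(x)] = \nabla f_\beta(x)$, this gives the claimed bound $\|\mathbb{E}[\hat\nabla_u f(x)]-\nabla f(x)\|\le \beta L$.

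The main obstacle is precisely the smoothing identity, and it is also what rules out a more elementary route. If instead one tried to pass through the pointwise estimate $\|\hat\nabla_u f(x)-\nabla_u f(x)\|\le dL\beta/2$ established in the preceding lemma together with Jensen's inequality, one would pick up a spurious factor of order $d$; even a sharper Cauchy--Schwarz argument exploiting $\mathbb{E}_u[uu^T]=d^{-1}I_d$ only removes part of this, leaving a factor $\sqrt d$. The point is that the error vectors $\hat\nabla_u f(x)-\nabla_u f(x)$ nearly cancel \emph{in expectation}, and the ball-smoothing identity is what captures this cancellation at the level of the full vector rather than its norm. I would therefore prove the identity carefully by writing $f_\beta(x)=\operatorname{vol}(\beta\mathbb{B}_d)^{-1}\int_{\beta\mathbb{B}_d} f(x+y)\,dy$, differentiating in $x$, and invoking the divergence theorem to convert the resulting volume integral over the ball into a surface integral over the sphere, after which rescaling to the unit sphere produces the stated factor $d/\beta$.
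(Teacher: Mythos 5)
Your proof is correct. Note, however, that the paper does not prove this lemma at all: it is imported verbatim as Lemma~14 of the cited reference \cite{JMLR:v21:19-198}, so there is no in-paper argument to compare against. What you have written is the standard (and essentially the only known) route to the dimension-free bias bound: the divergence-theorem identity $\nabla f_\beta(x) = \frac{d}{\beta}\,\mathbb{E}_{u\sim\mathcal{U}(\mathbb{S}_d)}[f(x+\beta u)\,u]$ for the ball-smoothed surrogate $f_\beta$, the observation that $\mathbb{E}[u]=0$ lets you subtract $f(x)$ inside the expectation so that $\mathbb{E}[\hat\nabla_u f(x)]=\nabla f_\beta(x)$, and then Jensen plus $L$-smoothness to get $\|\nabla f_\beta(x)-\nabla f(x)\|\le L\beta\,\mathbb{E}\|w\|\le L\beta$; this is precisely the Flaxman--Kalai--McMahan smoothing argument that the cited lemma itself rests on. Your constants check out ($\mathrm{area}(\beta\mathbb{S}_d)/\mathrm{vol}(\beta\mathbb{B}_d)=d/\beta$), both interchanges (divergence theorem and differentiation under the integral) are justified since $L$-smoothness gives a continuous, locally bounded gradient, and your diagnosis of why the naive route fails is also accurate: pushing the pointwise bound $\|\hat\nabla_u f(x)-\nabla_u f(x)\|\le dL\beta/2$ of the paper's Lemma~5 through Jensen can only ever yield an $O(d\beta L)$ bias, because it ignores the cancellation of the error vectors in expectation, which is exactly what the smoothing identity captures.
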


\subsection{Proof of Lemma 1}
Note that for Option I, we have $\mathbb{E}\Econd*{\hat{\nabla}_{\mathcal{S}_k} f(x^k)}{\mathcal{F}_k}=\hat{\nabla} f(x^k)$, and therefore
\[
\begin{aligned}
	\left\|\mathbb{E}\Econd*{g^k}{\mathcal{F}_k}-\nabla f(x^k)\right\|^2 &= \left\| \hat{\nabla}f(x^k) - \nabla f(x^k)\right\|^2\\
	&= \frac{1}{d^2}\sum_{i=1}^{d} \left\|\hat{\nabla}_{e_i} f(x^k) - \nabla_{e_i} f(x^k)\right\|^2
 \stackrel{\eqref{eq:two_point_bias}}{\leq} \frac{d L^2 \beta^2}{4}.
\end{aligned}
\]
Then, for the proof of \eqref{eq:variance1}, we have
\begin{align}
& \left\|g^k - \nabla f(x^k)\right\|^2 \nonumber\\
={} &
\Bigg\|
\hat{\nabla}_{\mathcal{S}_k} f(x^k)
-\nabla_{\mathcal{S}_k} f(x^k)
-\left(\frac{d}{|\mathcal{S}_k|}
\sum_{u\in\mathcal{S}_k} uu^T - I_d \right)
\left(\hat{\nabla} f(w^k) - \nabla f(w^k)\right) \nonumber\\
& 
\qquad +\nabla_{\mathcal{S}_k} f(x^k)
-\left(\frac{d}{|\mathcal{S}_k|}
\sum_{u\in\mathcal{S}_k} uu^T - I_d \right)
\nabla f(w^k)
-\nabla f(x^k)
\Bigg\|^2 \nonumber\\
\leq{} &
4 \left\|\hat\nabla_{\mathcal{S}_k} f(x^k)
-\nabla_{\mathcal{S}_k} f(x^k)\right\|^2
+ 4
\left\|
\left(
\frac{d}{|\mathcal{S}_k|}
\sum_{u\in\mathcal{S}_k} uu^T - I_d
\right)
\left(
\hat\nabla f(w^k) - \nabla f(w^k)
\right)\right\|^2 \nonumber\\
& + 2
\left\|
\nabla_{\mathcal{S}_k} f(x^k)
-\nabla_{\mathcal{S}_k} f(w^k)
+\nabla f(w^k)
-\nabla f(x^k)
\right\|^2.
\label{eq:gk_gradient_diff_step1}
\end{align}
For the first term on the right-hand side, notice that all elements in $\mathcal{S}_k$ are orthogonal for Option~I, and so
\[
\begin{aligned}
\left\|
\hat\nabla_{\mathcal{S}_k} f(x^k) - \nabla_{\mathcal{S}_k} f(x^k)
\right\|^2
={} &
\sum_{u\in\mathcal{S}_k}
\left\|
\frac{1}{|\mathcal{S}_k|}\left(
\hat\nabla_u f(x^k) - \nabla_u f(x^k)\right)
\right\|^2
\leq
\frac{\beta^2 L^2 d^2}{4|\mathcal{S}_k|}.
\end{aligned}
\]
Then for the second term, we notice that
\[
\begin{aligned}
& \mathbb{E}\Econd*{\left\|
\left(
\frac{d}{|\mathcal{S}_k|}
\sum_{u\in\mathcal{S}_k} uu^T - I_d
\right)
\left(
\hat\nabla f(w^k) - \nabla f(w^k)
\right)\right\|^2}{\mathcal{F}_k} \\
={} &
\left(
\hat\nabla f(w^k) - \nabla f(w^k)
\right)^{\!T}
\mathbb{E}\Econd*{
\left(
\frac{d}{|\mathcal{S}_k|}
\sum_{u\in\mathcal{S}_k} uu^T - I_d
\right)^2
}{\mathcal{F}_k}
\left(
\hat\nabla f(w^k) - \nabla f(w^k)
\right) \\
={} &
\left(
\hat\nabla f(w^k) - \nabla f(w^k)
\right)^{\!T}
\mathbb{E}\Econd*{
\left(\frac{d^2}{|\mathcal{S}_k|^2}
-2\frac{d}{|\mathcal{S}_k|}\right)
\sum_{u\in\mathcal{S}_k} uu^T
+ I_d
}{\mathcal{F}_k}
\left(
\hat\nabla f(w^k) - \nabla f(w^k)
\right) \\
={} & 
\frac{d-|\mathcal{S}_k|}{|\mathcal{S}_k|}
\left\|\hat\nabla f(w^k) - \nabla f(w^k)\right\|^2
\leq
\frac{d(d-|\mathcal{S}_k|)L^2\beta^2}{4|\mathcal{S}_k|},
\end{aligned}
\]
where in the second step we used $
\left(\sum_{u\in\mathcal{S}_k} uu^T\right)^2
=\sum_{u\in\mathcal{S}_k} uu^T$, and in the last step we used
\[
\left\|\hat\nabla f(w^k) - \nabla f(w^k)\right\|^2
=\frac{1}{d^2}
\sum_{i=1}^d \left\|\hat{\nabla}_{e_i} f(w^k) - \nabla_{e_i} f(w^k)\right\|^2
 \stackrel{\eqref{eq:two_point_bias}}{\leq} \frac{d L^2 \beta^2}{4}.
\]
As a result, we get
\[
\begin{aligned}
&\mathbb{E}\Econd*{\left\|g^k-\nabla f(x^{k})\right\|^{2}}{\mathcal{F}_k} \\
\leq{} &
2\,
\mathbb{E}\Econd*{\left\|
\nabla_{\mathcal{S}_k} f(x^k)
-\nabla_{\mathcal{S}_k} f(w^k)
+\nabla f(w^k)
-\nabla f(x^k)
\right\|^2}{\mathcal{F}_k}
+
\frac{2d^2-|\mathcal{S}_k|d}{|\mathcal{S}_k|}\beta^2 L^2.
\end{aligned}
\]
Now to simplify notation, given $\mathcal{S}_k $,  we let
\begin{align*}
	b_i & = \nabla_{e_i} f(x^k) -  \nabla_{e_i } f(w^k)  - (\nabla f(x^k) - \nabla f(w^k)),
 &
	\iota_i & = \begin{cases}
		1, & \text{if} \ e_i  \in \mathcal{S}_k,\\
		0, & \text{otherwise}.
	\end{cases}
\end{align*}
Then it is obvious that $\mathbb{E}\Econd*{\iota_i ^2}{\mathcal{F}_k} = \frac{|\mathcal{S}_k|}{d}$ and $\mathbb{E}\Econd*{\iota_i \iota_j}{\mathcal{F}_k} = \frac{|\mathcal{S}_k|(|\mathcal{S}_k|-1)}{d(d-1)}$ for $i\neq j$. Therefore
\begin{align*}
	&\mathbb{E}\Econd*{\left\|\nabla_{\mathcal{S}_k} f(x^k) - \nabla_{\mathcal{S}_k} f(w^k) + \nabla f (w^k) - \nabla f(x^k)\right\|^2}{\mathcal{F}_k}\nonumber\\
	={} &\frac{1}{|\mathcal{S}_k| ^2}\mathbb{E}\Econd*{\left\|\sum_{i=1} ^d \iota_i b_i\right\|^2}{\mathcal{F}_k}\nonumber\\
	={} &\frac{1}{|\mathcal{S}_k| ^2} \left(\sum_{i=1} ^d \mathbb{E}\Econd*{\iota_i ^2 \left \|b_i\right \|^2}{\mathcal{F}_k} + \sum_{i\neq j}\mathbb{E}\Econd*{\iota_i \iota_j \langle b_i, b_j \rangle}{\mathcal{F}_k} \right)\nonumber\\
	={} &\frac{1}{|\mathcal{S}_k|^2}\left(\left(\frac{|\mathcal{S}_k|}{d}-\frac{|\mathcal{S}_k|(|\mathcal{S}_k|-1)}{d(d-1)}\right)\sum_{i=1}^d\left\|b_i\right\|^2+\frac{|\mathcal{S}_k|(|\mathcal{S}_k|-1)}{d(d-1)}\left\|\sum_{i=1}^d b_i\right\|^2\right).\nonumber\\
 ={} &
 \frac{d-|\mathcal{S}_k|}{|\mathcal{S}_k|d(d-1)}
 \sum_{i=1}^d \|b_i\|^2,
\end{align*}
where we used $\sum_{i=1} ^d b_i = 0$. Then since $\sum_{i=1}^{d} \left\|a_i - \frac{1}{d}\sum_{j=1}^{d} a_j\right\|^2 \leq \sum_{i=1}^{d} \|a_i \|^2$, we have
\[
\begin{aligned}
\sum_{i=1}^d \|b_i\|^2
& = \sum_{i=1}^d
\left\|
\nabla_{e_i} f(x^k) - \nabla_{e_i} f(w^k)
-\frac{1}{d}\sum_{j=1}^d
\left(\nabla_{e_j} f(x^k) - \nabla_{e_j} f(w^k)\right)
\right\|^2 \\
& \leq \sum_{i=1}^d \left\|\nabla_{e_i} f(x^k)
-\nabla_{e_i} f(w^k)\right\|^2
= d^2 \|\nabla f(x^k)-\nabla f(w^k)\|^2 \\
& \leq 2L d^2
(f(w^k)-f(x^k)+\langle\nabla f(x^k),w^k-x^k\rangle),
\end{aligned}
\]
where the last step follows from the $L$-smoothness of $f$. Summarizing these results, we get
\[
\begin{aligned}
 & \mathbb{E}\Econd*{
\|g^k-\nabla f(x^k)\|^2
}{\mathcal{F}_k} \\
\leq{} &
\frac{4d(d-|\mathcal{S}|)L}{(d-1)|\mathcal{S}|}
(f(w^k)-f(x^k)+\langle\nabla f(x^k),w^k-x^k\rangle)
+
\frac{2d^2-|\mathcal{S}|d}{|\mathcal{S}|}\beta^2 L^2,
\end{aligned}
\]
and by $\frac{2d^2-|\mathcal{S}|d}{|\mathcal{S}|}
\leq 2d^2$ we conclude the proof.

\subsection{Proof of Lemma 2}

In this part, we shall denote $\mathcal{S}_k = \{u_1^k,\ldots,u_{|\mathcal{S}|}^k\}$.

First, note that for Option II, $u_1^k,\ldots,u_{|\mathcal{S}|}^k$ are i.i.d. and follows the distribution $\mathcal{U}(\mathbb{S}_d)$, we have
\[
\begin{aligned}
\left\|\mathbb{E}\Econd*{g^k}{\mathcal{F}_k} - \nabla f(x^k)\right\|^2
={} & \left\|
\mathbb{E}\Econd*{\hat\nabla_{u_1^k}f(x^k)}{\mathcal{F}_k}-\nabla f(x)\right\|^2
\leq \beta^2 L^2,
\end{aligned}
\]
where the last step follows from Lemma~\ref{lemma:bias_2point_uniform_sphere}.

Then, to prove \eqref{eq:variance2}, we notice that the bound~\eqref{eq:gk_gradient_diff_step1} still holds for Option~II:
\[
\begin{aligned}
& \left\|g^k - \nabla f(x^k)\right\|^2 \\
\leq{} &
4 \left\|\hat\nabla_{\mathcal{S}_k} f(x^k)
-\nabla_{\mathcal{S}_k} f(x^k)\right\|^2
+ 4
\left\|
\left(
\frac{d}{|\mathcal{S}_k|}
\sum_{u\in\mathcal{S}_k} uu^T - I_d
\right)
\left(
\hat\nabla f(w^k) - \nabla f(w^k)
\right)\right\|^2 \\
& + 2
\left\|
\nabla_{\mathcal{S}_k} f(x^k)
-\nabla_{\mathcal{S}_k} f(w^k)
+\nabla f(w^k)
-\nabla f(x^k)
\right\|^2.
\end{aligned}
\]
This time, for the first term on the right-hand side, we have
\[
\begin{aligned}
\mathbb{E}\Econd*{\left\|\hat\nabla_{\mathcal{S}_k} f(x^k)
-\nabla_{\mathcal{S}_k} f(x^k)\right\|^2}{\mathcal{F}_k}
={} &
\mathbb{E}\Econd*{\left\|
\frac{1}{|\mathcal{S}_k|}\sum_i
\left(\hat\nabla_{u_i^k} f(x^k)
-\nabla_{u_i^k} f(x^k)\right)\right\|^2}{\mathcal{F}_k} \\
\leq{} &
\mathbb{E}\Econd*{\left|\frac{1}{|\mathcal{S}_k|}
\sum_i\left\|
\hat\nabla_{u_i^k} f(x^k)
-\nabla_{u_i^k} f(x^k)\right\|\right|^2}{\mathcal{F}_k} \\
\leq{} &
\frac{\beta^2 L^2 d^2}{4},
\end{aligned}
\]
where the last inequality follows from the bound~\eqref{eq:two_point_bias}; in order to bound the second term, we notice that
\[
\begin{aligned}
& \mathbb{E}\Econd*{
\left(
\frac{d}{|\mathcal{S}_k|}
\sum_{u\in\mathcal{S}_k} uu^T - I_d
\right)^2
}{\mathcal{F}_k} \\
={} &
\mathbb{E}\Econd*{
\frac{d^2}{|\mathcal{S}_k|^2}
\sum_{i,j}
u_i^k(u_i^k)^T u_j^k(u_j^k)^T
-2\frac{d}{|\mathcal{S}_k|}
\sum_{i} u_i^k(u_i^k)^T
+ I_d
}{\mathcal{F}_k} \\
={} &
\mathbb{E}\Econd*{
\frac{d^2}{|\mathcal{S}_k|^2}
\left(\sum_{i}
u_i^k(u_i^k)^T u_i^k(u_i^k)^T
+
\sum_{i\neq j}
u_i^k(u_i^k)^T u_j^k(u_j^k)^T
\right)
-2\frac{d}{|\mathcal{S}_k|}
\sum_{i} u_i^k(u_i^k)^T
+ I_d
}{\mathcal{F}_k} \\
={} &
\frac{d^2}{|\mathcal{S}_k|^2}
\left(\sum_i \mathbb{E}\Econd*{u_i^k(u_i^k)^T}{\mathcal{F}_k}
+\sum_{i\neq j} \mathbb{E}\Econd*{u_i^k(u_i^k)^T}{\mathcal{F}_k}
\cdot \mathbb{E}\Econd*{u_j^k(u_j^k)^T}{\mathcal{F}_k}\right) \\
&
-2\frac{d}{|\mathcal{S}_k|}\sum_i \mathbb{E}\Econd*{u_i^k(u_i^k)^T}{\mathcal{F}_k}
+I_d \\
={} & 
\frac{d-1}{|\mathcal{S}_k|} I_d,
\end{aligned}
\]
where we used $\mathbb{E}\Econd*{u_i(u_i^k)^T}{\mathcal{F}_k}=\frac{1}{d}I_d$, the proof of which can be seen in Lemma 5 of \cite{ji2019improved}. Therefore
\[
\begin{aligned}
& \mathbb{E}\Econd*{\left\|
\left(
\frac{d}{|\mathcal{S}_k|}
\sum_{u\in\mathcal{S}_k} uu^T - I_d
\right)
\left(
\hat\nabla f(w^k) - \nabla f(w^k)
\right)\right\|^2}{\mathcal{F}_k} \\
={} &
\left(
\hat\nabla f(w^k) - \nabla f(w^k)
\right)^{\!T}
\mathbb{E}\Econd*{
\left(
\frac{d}{|\mathcal{S}_k|}
\sum_{u\in\mathcal{S}_k} uu^T - I_d
\right)^2
}{\mathcal{F}_k}
\left(
\hat\nabla f(w^k) - \nabla f(w^k)
\right) \\
={} & 
\frac{d-1}{|\mathcal{S}_k|}
\left\|
\hat\nabla f(w^k) - \nabla f(w^k)
\right\|^2 \\
={} & \frac{(d-1)}{|\mathcal{S}_k|}
\frac{1}{d^2}\sum_{i=1}^d
\|\hat\nabla_{e_i}f(w^k) - \nabla e_i f(w^k)\|^2 \\
\leq{} &
\frac{d(d-1)}{|\mathcal{S}_k|}\beta^2 L^2.
\end{aligned}
\]
Consequently,
\begin{equation}
\label{eq:variance2.1}
\begin{aligned}
& \mathbb{E}\Econd*{\left\|g^k - \nabla f(x^k)\right\|^2}{\mathcal{F}_k} \\
\leq{} &
2
\mathbb{E}\Econd*{
\left\|
\nabla_{\mathcal{S}_k} f(x^k)
-\nabla_{\mathcal{S}_k} f(w^k)
+\nabla f(w^k)
-\nabla f(x^k)
\right\|^2}{\mathcal{F}_k}
+ \left(\frac{d(d-1)}{|\mathcal{S}_k|}+d^2\right)L^2\beta^2
\end{aligned}
\end{equation}
Next, because $u_i ^k, i=1, 2, \ldots, |\mathcal{S}|$ are i.i.d. and sampled from $\mathcal{U}(\mathbb{S}_d)$, it follows that
\[
\begin{aligned}
	& \mathbb{E}\Econd*{\left\|\nabla_{\mathcal{S}_k} f(x^k) - \nabla_{\mathcal{S}_k} f(w^k) + \nabla f (w^k) - \nabla f(x^k)\right\|^2}{\mathcal{F}_k}\\
	={} &\frac{1}{| \mathcal{S}_k|}\,\mathbb{E}\Econd*{\left\|\nabla_{u_1 ^k} f(x^k) - \nabla_{u_1 ^k} f(w^k) + \nabla f (w^k) - \nabla f(x^k)\right\|^2}{\mathcal{F}_k} \\
	\leq{} & \frac{1}{|\mathcal{S}_k|}\,\mathbb{E}\Econd*{\left\|\nabla_{u_1 ^k} f(x^k) - \nabla_{u_1 ^k} f(w^k) \right\|^2}{\mathcal{F}_k}\\
	={} &\frac{d^2}{|\mathcal{S}_k|}\, \mathbb{E}\Econd*{\left\|\langle \nabla f(x^k), u_1 ^k\rangle u_1 ^k - \langle \nabla f(w^k), u_1^k \rangle u_1^k \right\|^2}{\mathcal{F}_k}\\
	={}&\frac{d^2}{|\mathcal{S}_k|} \mathbb{E}\Econd*{\left(\langle \nabla f(x^k) - \nabla f(w^k), u_1 ^k\rangle \right)^2}{\mathcal{F}_k}.
\end{aligned}
\]
By using $\mathbb{E}\Econd*{u_1 ^k {(u_1 ^k)} ^T}{\mathcal{F}_k} = \frac{1}{d} I_d$, we get
\begin{align}\label{eq:variance2.2}
	& \mathbb{E}\Econd*{\left\|\nabla_{\mathcal{S}_k} f(x^k) - \nabla_{\mathcal{S}_k} f(w^k) + \nabla f (w^k) - \nabla f(x^k)\right\|^2}{\mathcal{F}_k}\nonumber\\
	\leq{} & \frac{d}{|\mathcal{S}_k|}\, \mathbb{E}\Econd*{\left\| \nabla f(x^k) - \nabla f(w^k) \right\|^2}{\mathcal{F}_k}\nonumber\\
	\leq{} &  \frac{2dL}{|\mathcal{S}_k|}(f(w^k) - f(x^k) + \langle \nabla f(x^k), w^k - x^k\rangle).
\end{align}
Plugging \eqref{eq:variance2.2} back into \eqref{eq:variance2.1} and using $\frac{d(d-1)}{|\mathcal{S}_k|}+d^2\leq 2d^2$ concludes the proof.

\subsection{Proof of Lemma 3}
We have
\begin{align*}
	& \frac{M}{2\eta}\left\|z^{k+1}-z^{k}\right\|^{2}+\left\langle g^{k},z^{k+1}-z^{k}\right\rangle  \\
	={} & \frac{1}{\theta}\left(\frac{M}{2\eta\theta}\left\|y^{k+1}-x^k\right\|^2+\left\langle g^k,y^{k+1}-x^k\right\rangle\right)  \\
	={} & \frac{1}{\theta}\left(\frac{L}{2}\left\|y^{k+1}-x^k\right\|^2+\left\langle\nabla f(x^k),y^{k+1}-x^k\right\rangle+\left(\frac{M}{2\eta\theta}-\frac{L}{2}\right)\left\|y^{k+1}-x^k\right\|^2\right)\\
	&+\frac{1}{\theta}\left(\left\langle g^k-\nabla f(x^k),y^{k+1}-x^k\right\rangle\right)\\
	\geq{} & \dfrac{1}{\theta}\left(f(y^{k+1})-f(x^k)+\left(\frac{M}{2\eta\theta}-\frac{L}{2}\right)\left\|y^{k+1}-x^k\right\|^2+\left\langle g^k-\nabla f(x^k),y^{k+1}-x^k\right\rangle\right)\\
	\geq{} & \dfrac{1}{\theta}\left(f(y^{k+1})-f(x^k) - \frac{\eta \theta}{2(M-L\eta\theta)}\|g^k - \nabla f(x^k)\|^2\right).
\end{align*}
Here the first step comes from the update rule for $y_{k+1}$ in \textit{ZO-L-Katyusha}; we used the smoothness of $f$ in the third step; in the last step we used $\langle a, b\rangle \geq -\frac{\|a\|^2}{2\epsilon} -\frac{\|b\|^2 \epsilon}{2}$ for any $\epsilon > 0$.
\subsection{Proof of Lemma 4}

From the algorithm we have
\[
z^{k+1}=\operatorname{prox}_{\frac{\eta}{(1+\eta\sigma)M}\psi}\!\bigg(\frac{\eta\sigma x^{k}+z^{k}-\frac{\eta}{M}g^{k}}{1+\eta\sigma}\bigg),
\]
and so by the first-order optimality condition for minimizing convex functions, we can find $\xi^k\in\partial \psi(z^{k+1})$ such that 
\begin{equation*}
	z^{k+1}-\frac{1}{1+\eta\sigma}\left(\eta\sigma x^k+z^k-\frac{\eta}{M}g^k\right)+\frac{\eta}{(1+\eta\sigma)M}\xi^k=0.
\end{equation*}
Together with $\mu_{f} = M\sigma$, we obtain
\begin{equation*}
	g^k=\dfrac{M}{\eta}(z^k-z^{k+1})+\mu_f(x^k-z^{k+1})-\xi^k.
\end{equation*}
Therefore, we have
\begin{align*}
	\langle g^{k},z^{k+1}-x^{*}\rangle  
 ={} & \mu_{f}\langle x^{k}-z^{k+1},z^{k+1}-x^{*}\rangle+{\frac{M}{\eta}}\langle z^{k}-z^{k+1},z^{k+1}-x^{*}\rangle-\langle \xi^k ,z^{k+1}-x^{*}\rangle  \\
	={} & {\frac{\mu_{f}}{2}}\left(\|x^{k}-x^{*}\|^{2}-\|x^{k}-z^{k+1}\|^{2}-\|z^{k+1}-x^{*}\|^{2}\right) \\
	&+\frac{M}{2\eta}\left(\|z^{k}-x^{*}\|^{2}-\|z^{k}-z^{k+1}\|^{2}-\|z^{k+1}-x^{*}\|^{2}\right)-\langle \xi^k, z^{k+1}-x^{*}\rangle  \\
	\leq{} &\frac{\mu_{f}}{2}\|x^{k}-x^{*}\|^{2}+\frac{M}{2\eta}\left(\|z^{k}-x^{*}\|^{2}-\left(1+\frac{\eta\mu_f}{M}\right)\|z^{k+1}-x^{*}\|^{2}\right) \\
	&-\frac{M}{2\eta}\|z^{k}-z^{k+1}\|^{2}+\psi(x^{*})-\psi(z^{k+1})-\frac{\mu_{\psi}}{2}\|z^{k+1}-x^{*}\|^{2} \\
	={} &\frac{\mu_f}{2}\|x^k-x^*\|^2+\frac{M}{2\eta}\left(\|z^k-x^*\|^2-\left(1+\frac{\eta\mu}{M}\right)\|z^{k+1}-x^*\|^2\right) \\
	&\quad-\frac{M}{2\eta}\|z^{k}-z^{k+1}\|^{2}+\psi(x^{*})-\psi(z^{k+1}),
\end{align*}
where the third step follows from the fact that $\psi$ is $\mu_\psi$-strongly convex, and in the last equality we used $\mu=\mu_{\psi} + \mu_{f}$.

Since $\mathcal{Z}_k=\frac{M+\eta\mu}{2\eta}\|z^{k}-x^{*}\|^{2}$, rearranging terms yields the desired result.

\end{document}